\documentclass[a4paper] {article}
[12pt]

\makeatletter
\renewcommand*\l@section{\@dottedtocline{1}{1.5em}{2.3em}}
\makeatother

\usepackage{amsfonts}
\usepackage{amssymb}
\usepackage[T1]{fontenc}

\usepackage{tikz}
\usetikzlibrary{calc}

\usepackage{CJK}
\usepackage{amsmath}
 
\usepackage{amsfonts}
\usepackage{amssymb}
\usepackage{amsthm}
\usepackage{amssymb}
\usepackage{enumerate}
\usepackage[calc]{picture}
\usepackage[all,cmtip]{xy}

\usepackage[mathscr]{eucal}
\usepackage{eqlist}

\usepackage{color}
\usepackage{abstract} 
\usepackage[T1]{fontenc}
 
\setlength{\abovecaptionskip}{0pt}
\setlength{\belowcaptionskip}{0pt}

\usepackage[top=3.8cm, bottom=3.8cm, left=3.5 cm, right= 3.5cm]{geometry}

\theoremstyle{plain}
\newtheorem{theorem}{Theorem}
\newtheorem{proposition}[theorem]{Proposition}
\newtheorem{lemma}[theorem]{Lemma}

\newtheorem{example}[theorem]{Example}
\newtheorem{corollary}[theorem]{Corollary}

\theoremstyle{definition}
\newtheorem{definition}{Definition}

\usepackage{etoolbox}
\newtheoremstyle{myrem}%name
 {3pt}%Space above
 {3pt}%Space below
 {\normalsize}%Body font
 { }%Indent amount
 {\itshape}% Theorem head font
 {:}%Punctuation after theorem head
 { }%Space after theorem head 2
 {}%Theorem head spec (can be left empty, meaning ?normal?)

 \theoremstyle{myrem}
 \newtheorem{remark}{Remark}
 \appto\remark{\leftskip\parindent}
 \appto\remark{\rightskip\parindent}

\numberwithin{equation}{section}
\numberwithin{theorem}{section}

\begin{document}

~~~
 
 \vspace{1.8cm}

\begin{center}
{\Large {\textbf { The Embedded Homology of Hypergraph Pairs % and  Two-Dimensional Persistence  
}}}
 \vspace{1.8cm}\\

Shiquan Ren*,  Jie  Wu*,   Mengmeng  Zhang*
 \footnotetext{* first authors.}

\begin{abstract}
 In  this paper,  we generalize the embedded homology  groups of  hypergraphs  initially given  in \cite{hg1}   and study the   relative embedded homology groups of  hypergraph pairs.   We  prove some long exact sequences   as  well as  a Mayer-Vietoris sequence  for  the  relative  embedded homology groups  of  hypergraph pairs.   Moreover,  we  briefly  discuss the two-dimensional persistence  for  the relative embedded homology groups  of  hypergraph pairs.
\end{abstract}

\end{center}

\begin{quote}
 Keywords:  Hypergraph,  Sub-hypergraph,   Relative  homology  group, Persistence homology, Mayer-Vietoris sequence, Complex network 

 Mathematics Subject Classification 2020: 55U10, 55U15, 68P05, 68P15 

\end{quote}

\section{Introduction}

The  relative homology groups  of   chain  complex pairs   is a generalization of the homology groups   of  a  single  chain complex.   In classical homology theory,  relative simplicial homology groups are used to study   simplicial complex  pairs  (cf. \cite[p. 115]{hatcher} and \cite[Chapter~3]{eat}).    In \cite{digraph1},  A. Grigor'yan, R. Jimenez, Y. Muranov and S.-T. Yau defined the relative path homology  groups  for digraphs and studied the Eilenberg-Steenrod axioms for the path homology.

In various disciplines of sciences and technologies, hypergraphs are   models for complex networks.   A  hypergraph can be interpreted as a  "virtual"  simplicial complex with some  non-maximal   faces missing.   The    simplicial homology  methods  can be applied to investigate   hypergraphs  (cf.  \cite{hg1,parks,pers}).

Let $V$  be a finite set with a total order $\prec$.
The elements of $V$ are called {\it vertices}.
Let $\Delta[V]$ be the standard (abstract) simplicial complex  with  vertices from $V$  (cf.  \cite[pp. 103-105]{hatcher} and \cite[Example~1.1]{hg1}).
The set of  the simplices  in $\Delta[V]$
is given by  all the non-empty subsets of $V$.
For convenience,  we   just   use   the (abstract) simplicial complex  $\Delta[V]$     to denote   the collection of all the non-empty subsets of $V$.

A {\it hypergraph} $\mathcal{H}$ with vertices from $V$
 is a subset of $\Delta[V]$  (cf.  \cite[p. 480]{hg1}) % \footnote[2]{To be precise,   a hypergraph  is a pair $(V,\mathcal{H})$  where $V$  is the  set of the vertices and $\mathcal{H}\subseteq \Delta[V]$  is the set of the hyperedges.  Here we omit the set of the vertices $V$  and simply call the set of the hyperedges $\mathcal{H}$ a hypergraph.  }.
 An element of $\mathcal{H}$ is called a {\it hyperedge}  of  $\mathcal{H}$.  For   any hyperedge  $\sigma\in\mathcal{H}$,  we can write  $\sigma$  in the form
 \begin{eqnarray*}
 \sigma=\{v_0,v_1,\cdots,v_n\}
 \end{eqnarray*}
 uniquely
   where $n\geq 0$  and $v_0\prec v_1\prec\cdots \prec v_n$  are distinct vertices in $V$.    We define   $n$  to be the dimension of $\sigma$.    We  say that $\sigma$ is of {\it dimension} $n$  or  $\sigma$ is an {\it $n$-hyperedge}.  We  write $\dim \sigma=n$.

Let $\mathcal{H}$  be a hypergraph with vertices from $V$.   If for any  hyperedge $\sigma\in\mathcal{H}$ and any non-empty subset $\tau\subseteq\sigma$  we  always have $\tau\in\mathcal{H}$,  then we call $\mathcal{H}$    an {\it (abstract)  simplicial complex}.       A  hyperedge  of a  simplicial complex is called a {\it simplex}.

   In 1991, A.D. Parks and S.L. Lipscomb \cite{parks}  used  simplicial complexes  and  simplicial homology  to study  hypergraphs.  Let  $\mathcal{H}$  be  a  hypergraph.  A.D. Parks and S.L. Lipscomb \cite{parks}   defined the {\it associated simplicial complex}  $\Delta\mathcal{H}$  of  $\mathcal{H}$  to  be the simplicial complex obtained from $\mathcal{H}$  by adding all the missing faces.
   Specifically, by  \cite{parks},  the associated simplicial complex    $\Delta\mathcal{H}$   of $\mathcal{H}$  is given by
\begin{eqnarray}\label{eq-ass}
\Delta\mathcal{H}=\{\sigma\in \Delta[V]\mid \sigma\subseteq \tau{\rm ~for~some~}\tau\in\mathcal{H}\}.
\end{eqnarray}
It can be  seen that $\Delta\mathcal{H}$ is   the smallest simplicial complex whose the set of the simplices contains all the hyperedges of $\mathcal{H}$.

In 2020,  supplementary to the associated simplicial complex $\Delta\mathcal{H}$  of $\mathcal{H}$     defined in   \cite{parks},         the  {\it lower-associated simplicial complex}  $\delta\mathcal{H}$  of $\mathcal{H}$  is defined by  S.  Ren   \cite[Subsection~2.1]{pers}  as
\begin{eqnarray}\label{eq-lca}
\delta\mathcal{H}=\{\sigma\in\mathcal{H}\mid {\rm ~for~any~}\tau\subseteq\sigma {\rm ~we~have~} \tau\in\mathcal{H}\}.
\end{eqnarray}
It can be seen that $\delta\mathcal{H}$ is the largest simplicial complex  whose the set  of
the  simplices is   contained  in the set  of  the hyperedges of  $\mathcal{H}$.   In  other words,   $\delta\mathcal{H}$  is  the  simplicial complex  obtained from $\mathcal{H}$ by removing all the hyperedges in $\mathcal{H}$ with missing faces.

Let $G$  be an abelian group.  We have a chain complex $C_*(\Delta\mathcal{H})$ with its coefficients in  $G$.
We let $\partial_*$ be the boundary map of $C_*(\Delta\mathcal{H})$.   For each $n\geq 0$,  we write $G(\mathcal{H})_n$  for  the  abelian group consisting  of all the formal linear combinations of the $n$-hyperedges in $\mathcal{H}$  with coefficients in $G$.   We have  a  graded abelian  sub-group
   \begin{eqnarray*}
   G(\mathcal{H})_*=\{G(\mathcal{H})_n\}_{n\geq 0}
   \end{eqnarray*}
   of $C_*(\Delta\mathcal{H})$.

   In 2019,  S. Bressan, J. Li, S. Ren and J. Wu \cite{hg1} applied the path homology methods by  A. Grigor'yan, Y. Lin, Y. Muranov and S.-T. Yau \cite{lin1,lin2} to hypergraphs.
They defined in \cite{hg1}  the {\it infimum chain complex} ${\rm Inf}(\mathcal{H})$  and the  {\it supremum chain complex}  ${\rm Sup}(\mathcal{H})$  respectively as the largest sub-chain complex of $C_*(\Delta\mathcal{H})$ contained in $G(\mathcal{H})_*$ and the smallest sub-chain complex of  $C_*(\Delta\mathcal{H})$ containing  $G(\mathcal{H})_*$.    For each $n\geq 0$,   it  is given  in \cite{hg1}  that
\begin{eqnarray*}
{\rm Inf}_n(\mathcal{H})=G(\mathcal{H})_n\cap \partial_n^{-1}G(\mathcal{H})_{n-1},  ~~~
{\rm Sup}_n(\mathcal{H})=G(\mathcal{H})_n+  \partial_{n+1}G(\mathcal{H})_{n+1}.
\end{eqnarray*}
It  is  deduced from  \cite[Proposition~2.4]{hg1}  that the   canonical  inclusion $\iota$ of ${\rm Inf}_*(\mathcal{H})$  into ${\rm Sup}_*(\mathcal{H})$  induces an isomorphism of homology groups.  The   embedded homology of $\mathcal{H}$ is defined in \cite{hg1}  as $H_*({\rm Inf}(\mathcal{H}))\cong H_*({\rm Sup}(\mathcal{H}))$,  which is denoted as $H_*(\mathcal{H})$ for short.

In this paper,   we generalize the embedded homology  groups  of  hypergraphs  initially given   in \cite{hg1}     and  define  the  relative  embedded homology  groups of hypergraph pairs.  Given a hypergraph pair $(\mathcal{H},\mathcal{A})$  where $\mathcal{A}$  is a sub-hypergraph of $\mathcal{H}$ (cf.  Definition~\ref{def-mfweq}  and   Definition~\ref{def-cmahg}),  we  prove in   Section~\ref{s3x} that  the  homology of the quotient infimum chain complex ${\rm Inf}(\mathcal{H})/{\rm Inf}(A)$   is canonically   isomorphic  to the homology of the quotient supremum chain complex ${\rm Sup}(\mathcal{H})/{\rm Sup}(\mathcal{A})$.   We  define this homology as the      relative embedded homology groups     of    $(\mathcal{H},\mathcal{A})$.
  We  prove some long exact sequences  for   the embedded  homology  of   hypergraph pairs $(\mathcal{H},\mathcal{A})$    in   Theorem~\ref{pr-xqy}  (main result I).  Then we derive Corollary~\ref{th1}   from  Theorem~\ref{pr-xqy}.   We  also  prove a Mayer-Vietoris sequence for the  relative embedded homology  groups  of  the hypergraph pair  $(\mathcal{H},\mathcal{A})$    in Theorem~\ref{th-3.888} (main result II).   Then we  derive Corollary~\ref{co-qmboz}  from Theorem~\ref{th-3.888}.   Supplementary  to  Theorem~\ref{pr-xqy}   and  Theorem~\ref{th-3.888},    we briefly discuss the two-dimensional  persistence of the  relative embedded homology  groups  of  hypergraph pairs for the purpose of potential applications,  by the end of this paper.

We  give some preliminaries in Section~\ref{s2x}.  We  define the  relative embedded homology  groups of  hypergraph pairs in Section~\ref{s3x}.  We  prove some auxiliary results in Section~\ref{s4x}.  As a by-product,  we give a cell structure for hypergraphs  by using the   relative  embedded homology groups,    in Section~\ref{s5x}.   We prove Main Result I in  Theorem~\ref{pr-xqy},  Section~\ref{s6x} and prove Main Result II  in  Theorem~\ref{th-3.888},  Section~\ref{s7x}.   Finally,   we give  a brief discussion on  the two-dimensional persistence of the relative embedded homology groups,  in Section~\ref{s8x}.

\section{Preliminaries}\label{s2x}

In  Subsection~\ref{ss2.1},  we  give some algebraic preliminaries on graded abelian subgroups of chain complexes and their embedded homology.  In  Subsection~\ref{ss2.2},  we give some topological preliminaries on   hypergraphs  and  their (lower-)associated simplicial complexes.

\subsection{Graded abelian subgroups of chain complexes  and homology}\label{ss2.1}

Let $C=\{C_n,\partial_n\}_{n\geq 0}$ be a chain complex.   Let $D=\{D_n\}_{n\geq 0}$ and $D'=\{D'_n\}_{n\geq 0}$ be  graded abelian subgroups of $C$  with $D'_n\subseteq D_n$ for each $n\geq 0$.   Let
\begin{eqnarray*}
&{\rm  Inf}_n(D,C)=D_n\cap \partial_n^{-1}D_{n-1}, ~~~
{\rm  Inf}_n(D',C) = D'_n\cap \partial_n^{-1}D'_{n-1}, \\
&{\rm  Sup}_n(D,C) = D_n+ \partial_{n+1} D_{n+1}, ~~~
{\rm  Sup}_n(D',C) = D'_n+ \partial_{n+1} D'_{n+1}   
\end{eqnarray*}
for each $n\geq  0$.   The {\it infimum  chain complex}  of $D$  in $C$ and the  {\it infimum  chain complex} of $D'$  in $C$ are 
\begin{eqnarray*}
{\rm  Inf}(D,C)&=&\{{\rm  Inf}_n(D,C), \partial_n\mid_{{\rm  Inf}_n(D,C)}\}_{n\geq 0}, \\
{\rm  Inf}(D',C)&=&\{{\rm  Inf}_n(D',C), \partial_n\mid_{{\rm  Inf}_n(D',C)}\}_{n\geq 0}.  
\end{eqnarray*}
The {\it  supremum  chain complex}  of $D$ in $C$ and     the  {\it supremum  chain complex} of $D'$  in $C$ are
\begin{eqnarray*}
{\rm  Sup}(D,C)&=&\{{\rm  Sup}_n(D,C), \partial_n\mid_{{\rm  Sup}_n(D,C)}\}_{n\geq 0}, \\
{\rm  Sup}(D',C)&=&\{{\rm  Sup}_n(D',C), \partial_n\mid_{{\rm  Sup}_n(D',C)}\}_{n\geq 0}. 
\end{eqnarray*}
 The  next lemma follows from   \cite[Proposition~2.1 and  Proposition~2.2]{hg1}.  
\begin{lemma}\label{le-zopqs|}
  For each $n\geq 0$,    the boundary maps $\partial_n\mid_{{\rm  Inf}_n(D,C)}$, $\partial_n\mid_{{\rm  Inf}_n(D',C)}$, $\partial_n\mid_{{\rm  Sup}_n(D,C)}$  and  $\partial_n\mid_{{\rm  Sup}_n(D',C)}$  are well-defined.  Thus ${\rm  Inf}(D,C)$, ${\rm  Inf}(D',C)$,  ${\rm  Sup}(D,C)$   and ${\rm  Sup}(D',C)$  are  all  sub-chain complexes  of  $C_*(\Delta\mathcal{H})$.  
 \end{lemma}
 The  next lemma follows from  \cite[Proposition~2.4]{hg1}.  
 \begin{lemma} 
 The canonical inclusion of     ${\rm  Inf}(D,C)$  (resp.   ${\rm  Inf}(D',C)$)  into    ${\rm  Sup}(D,C)$  (resp. ${\rm  Sup}(D',C)$)  induces an isomorphism between the homology groups $H_*({\rm  Inf}(D,C))$  (resp. $H_*({\rm  Inf}(D',C))$)  and  $H_*({\rm  Sup}(D,C))$  (resp. $H_*({\rm  Sup}(D',C))$).  
 \end{lemma}

Since $D'_n\subseteq  D_n$  for  each  $n\geq  0$,  it  holds  that  
$\text{Inf}(D',C)$ is a sub-chain complex of $\text{Inf}(D,C)$  and $\text{Sup}(D',C)$ is a sub-chain complex of $\text{Sup}(D,C)$.   The following  diagram commutes 
{\footnotesize \begin{eqnarray}
\xymatrix{
0\ar[d] && 0\ar[d] &\\
{\rm Inf}(D',C)\ar[r]\ar[d] & D'\ar[d] \ar[r] &{\rm Sup}(D',C)\ar[d] \ar[r] &C\\
{\rm Inf}(D,C)\ar[r]\ar[d] & D \ar[r] &{\rm Sup}(D,C) \ar[r]\ar[d] &C\\
{\rm Inf}(D,C)/{\rm Inf}(D',C)\ar[d] \ar[rr]^{\iota} && {\rm Sup}(D,C)/{\rm Sup}(D',C)  \ar[d] &\\
0 && 0.  &
}
\label{eq-lavs}
\end{eqnarray}}In the   diagram  (\ref{eq-lavs}),  both of the followings are satisfied:
\begin{enumerate}[(i).]
\item
 all  the  horizontal  maps in the second  row  and   all  the  horizontal  maps  in  the third  row are inclusions;   
 \item
 all  the  vertical maps in  the first column as  well  as  all  the  vertical maps in  the third column give  a  short exact sequence  of  chain complexes.  
  \end{enumerate}
  The  next lemma shows that the chain map $\iota$  in the diagram  (\ref{eq-lavs})  induces an isomorphism of homology groups. 
\begin{lemma}\label{le-1.1}
The chain map 
\begin{eqnarray*}
\iota:     {\rm Inf}(D,C)/{\rm Inf}(D',C)\longrightarrow {\rm Sup}(D,C)/{\rm Sup}(D',C)
\end{eqnarray*}
 induces an isomorphism of homology
 \begin{eqnarray}\label{eq-qozj}
\iota_*:     H_*({\rm Inf}(D,C)/{\rm Inf}(D',C))\overset{\cong}{\longrightarrow}H_*( {\rm Sup}(D,C)/{\rm Sup}(D',C)). 
\end{eqnarray}
\end{lemma}
\begin{proof}  
Let $n\geq 0$.  
By  Lemma~\ref{le-zopqs|}  or  \cite[Proposition~2.4]{hg1},  we  have   isomorphisms
\begin{eqnarray*}
 & H_n({\rm Inf}(D',C)) \overset{\cong}{\longrightarrow}   H_n({\rm Sup}(D',C)), \\
&  H_n({\rm Inf}(D,C)) \overset{\cong}{\longrightarrow}   H_n({\rm Sup}(D,C)),\\
 &  H_{n-1}({\rm Inf}(D',C)) \overset{\cong}{\longrightarrow}   H_{n-1}({\rm Sup}(D',C)), \\
 & H_{n-1}({\rm Inf}(D,C)) \overset{\cong}{\longrightarrow}   H_{n-1}({\rm Sup}(D,C)).  
\end{eqnarray*}
  Applying the homology functor    respectively   to the first column and  to  the third column  in the  diagram  (\ref{eq-lavs}),  we have two long exact sequences of homology groups which are given in the two rows of  the following   diagram
{\footnotesize \begin{eqnarray}
\xymatrix{
\cdots \ar[r] & H_n({\rm Inf}(D',C))\ar[r]\ar[d]^{\cong} &H_n({\rm Inf}(D,C)) \ar[r]\ar[d]^{\cong}  &&\\
\cdots \ar[r] & H_n({\rm Sup}(D',C))\ar[r] &H_n({\rm Sup}(D,C)) \ar[r] &&\\
 \ar[r]& H_n({\rm Inf}(D,C)/{\rm Inf}(D',C))\ar[d]^{}\ar[r] &H_{n-1}({\rm Inf}(D',C))\ar[r]\ar[d]^{\cong}&  \\
\ar[r]& H_n({\rm Sup}(D,C)/{\rm Sup}(D',C))\ar[r] &H_{n-1}({\rm Sup}(D',C))\ar[r]  &
\\
  &H_{n-1}({\rm Inf}(D,C)) \ar[r]\ar[d]^{\cong }&\cdots \\
  &H_{n-1}({\rm Sup}(D,C))\ar[r]&\cdots  }
\label{eq-pqmf}
\end{eqnarray}}The  diagram (\ref{eq-pqmf}) commutes.  Moreover,  the first, the second,  the fourth, and the  fifth vertical maps in the    diagram  (\ref{eq-pqmf})   are isomorphisms.   Applying the Five Lemma  to   the   diagram  (\ref{eq-pqmf}), we obtain that (\ref{eq-qozj})  is an isomorphism.  
\end{proof}

\subsection{Hypergraphs and simplicial complexes}\label{ss2.2}

Let $V$  be a finite set with a total order $\prec$.   
\begin{definition}(cf.  \cite[Chapter~17]{berge}  and \cite{hg1,parks,pers})\label{def-111a}
A  {\it hypergraph}   with vertices from $V$  is a collection of non-empty subsets of $V$.  An element in a hypergraph   is called a {\it hyperedge},  which is just  a non-empty subset of $V$. 
\end{definition}
 Let $\mathcal{H}$ be a hypergraph with  vertices from $V$.  
 \begin{definition} \label{def-mfweq}
  A {\it sub-hypergraph}  $\mathcal{A}$  of $\mathcal{H}$,  denoted as $\mathcal{A}\subseteq\mathcal{H}$,   is a hypergraph  with vertices from $V$ such that  for any $\sigma\in\mathcal{A}$,  we always have $\sigma\in\mathcal{H}$. 
\end{definition}
\begin{definition}(cf.  \cite[p. 107 and  p. 177]{hatcher}  and  \cite[Chapter~1, Section~2]{eat})\label{def-111b}
  An   {\it (abstract) simplicial complex}   with vertices from $V$  is a hypergraph $\mathcal{K}$ with vertices from $V$   such that for any $\sigma\in\mathcal{K}$  and any non-empty subset $\tau\subseteq \sigma$ we always have $\tau\in\mathcal{K}$.    
\label{def-pqmpxd111}
\end{definition}
\begin{definition}(cf.  \cite{hg1, parks} and  \cite[Subsection~2.1]{pers})
\label{def-zoqq}
\begin{enumerate}[(i).]
\item
 The {\it associated simplicial complex} $\Delta\mathcal{H}$   is  the  simplicial complex  (\ref{eq-ass})  whose the set of the simplices consists of all the non-empty subsets of   each   hyperedge  in $\mathcal{H}$; 
 \item
 The {\it lower-associated simplicial complex} $\delta\mathcal{H}$  is the simplicial complex (\ref{eq-lca})  whose the set of the simplices consists of all the hyperedges $\sigma$ in $\mathcal{H}$ such that all the non-empty subsets of $\sigma$ are still hyperedges in $\mathcal{H}$.  
 \end{enumerate}
\end{definition}
Let $V$  and $V'$ be  finite sets  with   total orders $\prec$.   Let $\mathcal{H}$  be a hypergraph  with vertices from  $V$ and let $\mathcal{H}'$ be a hypergraph with vertices from $V'$.  
\begin{lemma}\label{le-int-union}
(i). 
$\delta(\mathcal{H}\cap\mathcal{H}')=(\delta\mathcal{H})\cap(\delta\mathcal{H}')$; 
(ii).  
$\delta(\mathcal{H}\cup\mathcal{H}')\supseteq (\delta\mathcal{H})\cup(\delta\mathcal{H}')$; 
(iii).  
$\Delta(\mathcal{H}\cap\mathcal{H}')\subseteq (\Delta\mathcal{H})\cap(\Delta\mathcal{H}')$; 
(iv).  
$\Delta(\mathcal{H}\cup\mathcal{H}')= (\Delta\mathcal{H})\cup(\Delta\mathcal{H}')$.  
\end{lemma}
\begin{proof}
The proof is a direct verification  by (\ref{eq-ass}),  (\ref{eq-lca})    and Definition~\ref{def-pqmpxd111}. 
\end{proof}
\begin{definition}(cf.  \cite[Subsection~3.1]{hg1}  and  \cite{parks,pers})
\label{def-zoqqq} 
A {\it morphism}  $\varphi: \mathcal{H}\longrightarrow \mathcal{H}'$  of hypergraphs from $\mathcal{H}$ to $\mathcal{H}'$  is a map $\varphi: V\longrightarrow V'$  such that for any hyperedge $\sigma=\{v_0,v_1,\ldots,v_n\}$ of $\mathcal{H}$,  its image
$\varphi(\sigma)= \{\varphi(v_0),\varphi(v_1),\ldots, \varphi(v_n)\}$
  gives a hyperedge  of $\mathcal{H}'$.  
\end{definition}
\begin{remark}\label{remark-qmfj}
In Definition~\ref{def-zoqqq},  $v_0$, $v_1$, $\ldots$, $v_n$ are distinct vertices in $V$ with $v_0\prec v_1\prec\ldots \prec v_n$ while the vertices  $\varphi(v_0)$, $\varphi(v_1)$, $\ldots$, $\varphi(v_n)$  in $V'$ may not be  distinct and there may exist $0\leq i<j\leq n$  such that $\varphi(v_j)\prec \varphi(v_i)$.   Nevertheless,  we  can delete the repeated vertices in $\varphi(v_0)$, $\varphi(v_1)$, $\ldots$, $\varphi(v_n)$ and then rearrange the order.  
\end{remark}
\begin{definition}(cf.  \cite[p. 107 and  p. 177]{hatcher}  and  \cite[Chapter~1, Section~2]{eat}) 
Let $\mathcal{K}$  be a simplicial complex  with vertices from  $V$ and let $\mathcal{K}'$ be a simplicial complex with vertices from $V'$.   
 A  {\it simplicial map}  from $\mathcal{K}$  to $\mathcal{K}'$   is just a morphism $\varphi:  \mathcal{K}\longrightarrow \mathcal{K}'$  of hypergraphs. 
\label{def-pqmpxd}
\end{definition}
\begin{definition}(cf.  \cite{hg1,parks} and  \cite[Subsection~2.3]{pers})\label{def-zoq}
   Let $\varphi: \mathcal{H}\longrightarrow \mathcal{H}'$  be  a  morphism  of  hypergraphs.  
\begin{enumerate}[(i).]
\item
The  {\it associated  simplicial map}  $\Delta\varphi$  of $\varphi$ is a simplicial map 
$\Delta\varphi:  \Delta\mathcal{H}\longrightarrow \Delta\mathcal{H}'$  given by $\varphi(\sigma)= \{\varphi(v_0),\varphi(v_1),\ldots, \varphi(v_n)\}
$   for any simplex $\sigma\in\Delta\mathcal{H}$  with  $\sigma=\{v_0,v_1\ldots,v_n\}$;   
\item
The  {\it  lower-associated  simplicial map}  $\delta\varphi$  of $\varphi$ is a simplicial map 
$\delta\varphi:  \delta\mathcal{H}\longrightarrow \delta\mathcal{H}'$  given by  $\varphi(\sigma)= \{\varphi(v_0),\varphi(v_1),\ldots, \varphi(v_n)\}
$   for any simplex $\sigma\in\delta\mathcal{H}$  with  $\sigma=\{v_0,v_1\ldots,v_n\}$.  
\end{enumerate}
\end{definition}
\begin{remark}
Remark~\ref{remark-qmfj}  is applicable to   Definition~\ref{def-zoq}~(i) and (ii).  
\end{remark}
Let $\mathfrak{H}$  be the category of hypergraphs whose objects are hypergraphs and whose morphisms are morphisms of hypergraphs. 
Let $\mathfrak{K}$  be the category of simplicial complexes  whose objects are simplicial complexes and whose morphisms are simplicial maps.

\begin{lemma}\label{le-2.apqir}
We  have a (covariant)  functor $\Delta: \mathfrak{H}\longrightarrow \mathfrak{K}$  and a   (covariant)  functor    $\delta: \mathfrak{H}\longrightarrow \mathfrak{K}$.  
\end{lemma}

\begin{proof}
With the helps  of  Definition~\ref{def-111a},  Definition~\ref{def-111b},    Definition~\ref{def-zoqq}, Definition~\ref{def-zoqqq},  Definition~\ref{def-pqmpxd}  and  Definition~\ref{def-zoq},   it  can be verified that  
\begin{eqnarray*}
\Delta({\rm id}_\mathcal{H})={\rm  id} _{\Delta \mathcal{H}}
\end{eqnarray*}
for any  hypergraph $\mathcal{H}\in{\rm Obj}(\mathfrak{H})$  and  
\begin{eqnarray*}
\Delta(\psi\circ \varphi)= \Delta\psi\circ \Delta\varphi
\end{eqnarray*}
for any morphisms $\varphi: \mathcal{H}\longrightarrow \mathcal{H}'$  and $\psi: \mathcal{H}'\longrightarrow \mathcal{H}''$  of  hypergraphs.  Thus $\Delta: \mathfrak{H}\longrightarrow \mathfrak{K}$ is a (covariant) functor.  Similarly,  we can prove that $\delta: \mathfrak{H}\longrightarrow \mathfrak{K}$ is a (covariant) functor.  
\end{proof}

\section{The  relative  embedded  homology  groups  for  hypergraph  pairs  and  examples}\label{s3x}

In Subsection~\ref{ss3.1},  we  define the relative embedded homology groups for hypergraph pairs.  In Subsection~\ref{ss3.2},  we give some examples.

\subsection{The relative embedded homology groups for hypergraph pairs}\label{ss3.1}

\begin{definition}\label{def-cmahg}
Let $\mathcal{H}$ be a hypergraph.   Let  $\mathcal{A}$ be  a sub-hypergraph of $\mathcal{H}$.  We  call the pair $(\mathcal{H},\mathcal{A})$  a {\it hypergraph pair}.  In addition,  if  both $\mathcal{H}$  and $\mathcal{A}$  are   simplicial  complexes,  then we  call the hypergraph pair  $(\mathcal{H},\mathcal{A})$    a  {\it  simplicial complex pair}.  
\end{definition}

 Firstly,  let $(\mathcal{H},\mathcal{A})$  be  a   hypergraph pair.  
The canonical  inclusion $i: \mathcal{A}\longrightarrow\mathcal{H}$ of hypergraphs  is  a  morphism  of  hypergraphs.   By  Lemma~\ref{le-2.apqir},   the morphism  $i$  induces an inclusion $\delta i:  \delta\mathcal{A}\longrightarrow \delta\mathcal{H}$ of the lower-associated simplicial complexes as well as  an inclusion $\Delta i:  \Delta\mathcal{A}\longrightarrow \Delta\mathcal{H}$ of the associated simplicial complexes.  We have      the following commutative diagram
\begin{eqnarray}\label{eq-zoaq}
\xymatrix{
\Delta \mathcal{A}\ar[r]^{\Delta i} &\Delta\mathcal{H}\\
\mathcal{A}\ar[r]^{i}\ar[u] & \mathcal{H}\ar[u]\\
\delta\mathcal{A}\ar[r]^{\delta i}\ar[u] &\delta\mathcal{H}. \ar[u]
}
\end{eqnarray}
Here in the diagram (\ref{eq-zoaq}),  all the vertical maps and  the middle horizontal map are injective morphisms of hypergraphs,  while the top horizontal map and the bottom horizontal map are injective simplicial maps between simplicial complexes.

Secondly,  let $C_*(\Delta\mathcal{H})$  be the chain complex  of $\Delta\mathcal{H}$  and  $C_*(\Delta\mathcal{A})$  be the chain complex  of $\Delta\mathcal{A}$.   Then    
 the  simplicial map $\Delta i$  induces a   chain map 
\begin{eqnarray}\label{eq-xzqu}
\Delta  i:   C_*(\Delta \mathcal{A})\longrightarrow  C_*(\Delta\mathcal{H}). 
\end{eqnarray}
The restrictions  of (\ref{eq-xzqu}) to the sub-chain complexes $\text{Sup}_*(\mathcal{A})$,  $\text{Inf}_*(\mathcal{A})$  and  $C_*(\delta \mathcal{A})$  of $C_*(\Delta \mathcal{A})$  give chain maps 
\begin{eqnarray*}
\text{Sup}(i):&  \text{Sup}_*(\mathcal{A})\longrightarrow \text{Sup}_*(\mathcal{H}),\\
\text{Inf}(i):&  \text{Inf}_*(\mathcal{A})\longrightarrow \text{Inf}_*(\mathcal{H}),\\
\delta  i:&   C_*(\delta \mathcal{A})\longrightarrow  C_*(\delta\mathcal{H})
\end{eqnarray*}
respectively.  
 We have a commutative diagram of chain complexes 
{\footnotesize \begin{eqnarray}
\xymatrix{
0\ar[r] &C_*(\Delta \mathcal{A})\ar[r]^{\Delta i} & C_*(\Delta\mathcal{H})\ar[r]  & C_*(\Delta\mathcal{H})/ C_*(\Delta\mathcal{A})\ar[r] &0\\
0\ar[r] &\text{Sup}_*(\mathcal{A})\ar[r]^{\text{Sup}(i)} \ar[u]& \text{Sup}_*(\mathcal{H})\ar[r] \ar[u] & \text{Sup}_*(\mathcal{H})/ \text{Sup}_*(\mathcal{A})\ar[r] \ar[u]&0\\
0\ar[r] &\text{Inf}_*(\mathcal{A})\ar[r]^{\text{Inf}(i)} \ar[u]& \text{Inf}_*(\mathcal{H})\ar[r] \ar[u] & \text{Inf}_*(\mathcal{H})/ \text{Inf}_*(\mathcal{A})\ar[r] \ar[u]^{\iota}&0\\
0\ar[r] &C_*(\delta \mathcal{A})\ar[r]^{\delta i} \ar[u]& C_*(\delta\mathcal{H})\ar[r]\ar[u]  & C_*(\delta\mathcal{H})/ C_*(\delta\mathcal{A})\ar[r] \ar[u]&0. 
}
\label{eq-2.1}
\end{eqnarray}}Here in the  diagram (\ref{eq-2.1}),  all the maps are chain maps, each row is a short exact sequence, and the first two columns are injective chain maps.

Thirdly,  by applying the homology functor  to all the chain complexes and the chain maps in the last diagram,  we have a commutative diagram of homology groups
{\footnotesize \begin{eqnarray}\label{diag-zoqaz}
\xymatrix{
H_*(\Delta \mathcal{A})\ar[r]^{(\Delta i)_*} & H_*(\Delta\mathcal{H})\ar[r]  & H_*(\Delta\mathcal{H}, \Delta\mathcal{A}) \\
H_*(\text{Sup}_*(\mathcal{A}))\ar[r]^{\text{Sup}(i)_*} \ar[u]& H_*(\text{Sup}_*(\mathcal{H}))\ar[r] \ar[u] & H_*(\text{Sup}_*(\mathcal{H})/ \text{Sup}_*(\mathcal{A}))\ar[u]\\
H_*(\text{Inf}_*(\mathcal{A}))\ar[r]^{\text{Inf}(i)_*} \ar[u]^{\cong }& H_*( \text{Inf}_*(\mathcal{H}))\ar[r] \ar[u]^{\cong} & H_*(\text{Inf}_*(\mathcal{H})/ \text{Inf}_*(\mathcal{A})) \ar[u]^{\iota_*}\\
H_*(\delta \mathcal{A})\ar[r]^{(\delta i)_*} \ar[u]& H_*(\delta\mathcal{H})\ar[r]\ar[u]  & H_*(\delta\mathcal{H}, \delta\mathcal{A}). \ar[u]
   }
   \end{eqnarray}}
Note that by \cite[Proposition~2.4 and Proposition~3.4]{hg1},  we have the  isomorphisms 
\begin{eqnarray*}
H_*(\text{Inf}_*(\mathcal{A}))\cong H_*(\text{Sup}_*(\mathcal{A})), ~~~~~~
H_*(\text{Inf}_*(\mathcal{H}))\cong H_*(\text{Sup}_*(\mathcal{H}))
\end{eqnarray*}
  in the  diagram (\ref{diag-zoqaz}).  
\begin{lemma}\label{le-2.1}
The chain map 
\begin{eqnarray*}
\iota: {\rm Inf}_*(\mathcal{H})/ {\rm Inf}_*(\mathcal{A}) \longrightarrow {\rm Sup}_*(\mathcal{H})/ {\rm Sup}_*(\mathcal{A})
\end{eqnarray*}
 in the diagram (\ref{eq-2.1}) induces an isomorphism $\iota_*$ of  homology. 
\end{lemma}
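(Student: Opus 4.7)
The plan is to deduce Lemma~\ref{le-2.1} directly from Lemma~\ref{le-1.1} by supplying the right ambient data. I would take $C = C_*(\Delta\mathcal{H})$ equipped with the simplicial boundary operator $\partial_*$, and set $D = G(\mathcal{H})_*$ and $D' = G(\mathcal{A})_*$. The containments $\mathcal{A}\subseteq\mathcal{H}\subseteq\Delta\mathcal{H}$ give $G(\mathcal{A})_n\subseteq G(\mathcal{H})_n\subseteq C_n(\Delta\mathcal{H})$ for every $n\geq 0$, so $D'\subseteq D$ is a nested pair of graded abelian subgroups of $C$ in the sense of Subsection~\ref{s2.1}.

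Next, I would verify that under this choice the abstract $\text{Inf}$ and $\text{Sup}$ constructions of Subsection~\ref{s2.1} recover the intrinsic ${\rm Inf}_*$ and ${\rm Sup}_*$ attached to hypergraphs. For the top subgroup $D=G(\mathcal{H})_*$ this is immediate from the definitions recalled in the Introduction. For $D'=G(\mathcal{A})_*$ the one mild subtlety — and what I regard as the only real check — is that the intrinsic ${\rm Inf}_*(\mathcal{A})$ was defined using the boundary operator of $C_*(\Delta\mathcal{A})$, whereas $\text{Inf}(D',C)$ uses the boundary of $C_*(\Delta\mathcal{H})$. Because $\Delta\mathcal{A}$ is a simplicial subcomplex of $\Delta\mathcal{H}$, the two boundaries agree on any chain supported in $G(\mathcal{A})_*$, so the intersections $G(\mathcal{A})_n\cap\partial_n^{-1}G(\mathcal{A})_{n-1}$ coincide. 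An identical check shows $\text{Sup}(D',C)=\text{Sup}_*(\mathcal{A})$.

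With these identifications the quotient chain complexes and the map $\iota$ of Lemma~\ref{le-2.1} are exactly the ones appearing in Lemma~\ref{le-1.1}; indeed they sit in the third nontrivial column of diagram~(\ref{eq-2.1}). Invoking Lemma~\ref{le-1.1} then delivers the claimed isomorphism $\iota_*$ on homology. No further obstacle arises: the homological substance has already been absorbed into Lemma~\ref{le-1.1}, whose proof proceeds by the Five Lemma applied to the ladder of long exact sequences induced by the two short exact columns of the abstract $\text{Inf}/\text{Sup}$ diagram, together with the isomorphisms furnished by \cite[Proposition~2.4]{hg1}.
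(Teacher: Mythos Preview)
Your proposal is correct and follows essentially the same approach as the paper: the paper's proof simply says to substitute $C$ with $C_*(\Delta\mathcal{H})$, $D$ with $\mathbb{Z}(\mathcal{H})_*\otimes G$, and $D'$ with $\mathbb{Z}(\mathcal{A})_*\otimes G$ in Lemma~\ref{le-1.1}. Your write-up is in fact slightly more careful, in that you explicitly address why the intrinsic ${\rm Inf}_*(\mathcal{A})$ and ${\rm Sup}_*(\mathcal{A})$ (defined via $C_*(\Delta\mathcal{A})$) coincide with $\text{Inf}(D',C)$ and $\text{Sup}(D',C)$ (defined via $C_*(\Delta\mathcal{H})$); the paper's proof leaves this implicit.
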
 
\begin{proof}
In Lemma~\ref{le-1.1},  we substitute $C$ with the chain complex $C_*(\Delta\mathcal{H})$,  substitute $D$ with the graded abelian subgroup 
\begin{eqnarray*}
G(\mathcal{H})_*=\mathbb{Z}(\mathcal{H})_*\otimes G 
\end{eqnarray*}
of $C_*(\Delta\mathcal{H})$,  
and substitute $D'$ with the graded abelian subgroup  
\begin{eqnarray*}
G(\mathcal{A})_*=\mathbb{Z}(\mathcal{A})_*\otimes G
\end{eqnarray*}
of $C_*(\Delta\mathcal{H})$.   
 Then  Lemma~\ref{le-2.1} follows from Lemma~\ref{le-1.1}. 
\end{proof}

Finally,  we arrive at the definition of the  relative embedded homology groups  for hypergraph pairs.  
\begin{definition}\label{def-2.2}
We call the homology groups $H_*(\text{Inf}_*(\mathcal{H})/ \text{Inf}_*(\mathcal{A}) )$,   which is isomorphic to the homology groups $H_*(\text{Sup}_*(\mathcal{H})/ \text{Sup}_*(\mathcal{A}) )$,      the  {\it relative embedded homology groups} of the pair $(\mathcal{H}, \mathcal{A})$  or simply the {\it embedded homology}  of $(\mathcal{H},\mathcal{A})$.  We denote these  relative homology groups as $H_*(\mathcal{H},\mathcal{A})$.   
\end{definition}
\begin{remark}
If we allow $\mathcal{A}$ to be the empty-set $\emptyset$,  then the relative embedded homology groups $H_*(\mathcal{H},\emptyset)$ of the hypergraph pair $(\mathcal{H},\emptyset)$ is the usual embedded homology $H_*(\mathcal{H})$ of $\mathcal{H}$  (cf.  \cite[Subsection~3.2]{hg1}).  
\end{remark}

\subsection{Some examples}\label{ss3.2}

Let the abelian group $G$ be the integers $\mathbb{Z}$.    
\begin{example}
\label{ex-2.1}
Let  $\mathcal{H}=\{\{v_0\},\{v_1\},\{v_2\}, \{v_0,v_1\},\{v_0,v_1,v_2\}\}$. Then 
\begin{eqnarray*}
&{\rm Inf}_0(\mathcal{H})=\mathbb{Z}(\{v_0\},\{v_1\},\{v_2\}), ~~~
{\rm Inf}_1(\mathcal{H})=\mathbb{Z}(\{v_0,v_1\}), ~~~ 
{\rm Inf}_2(\mathcal{H})=0,\\ 
&{\rm Sup}_0(\mathcal{H})=\mathbb{Z}(\{v_0\},\{v_1\},\{v_2\}),  ~~~ {\rm Sup}_2(\mathcal{H})= \mathbb{Z}(\{v_0,v_1,v_2\}),  
\\
&{\rm Sup}_1(\mathcal{H})=\mathbb{Z}(\{v_0,v_1\},\{v_1,v_2\}-\{v_0,v_2\}+\{v_0,v_1\})
\\
&=\mathbb{Z}(\{v_0,v_1\},\{v_1,v_2\}-\{v_0,v_2\}). 
\end{eqnarray*}

\noindent (i). Let $\mathcal{A}=\{\{v_0\},\{v_1\},\{v_0,v_1\}\}$.  
Since $\mathcal{A}$  is a simplicial complex,  we  have 
\begin{eqnarray*}
&{\rm Inf}_0(\mathcal{A})={\rm Sup}_0(\mathcal{A})=\mathbb{Z}(\{v_0\},\{v_1\}),~~~
{\rm Inf}_1(\mathcal{A})={\rm Sup}_1(\mathcal{A})=\mathbb{Z}(\{v_0,v_1\}),\\
&{\rm Inf}_2(\mathcal{A})={\rm Sup}_2(\mathcal{A})=0. 
\end{eqnarray*}
Hence
\begin{eqnarray*}
&{\rm Inf}_0(\mathcal{H})/{\rm Inf}_0(\mathcal{A})=\mathbb{Z}(\{v_2\}),~~~ 
{\rm Inf}_1(\mathcal{H})/{\rm Inf}_1(\mathcal{A})={\rm Inf}_2(\mathcal{H})/{\rm Inf}_2(\mathcal{A})
=0,\\
&{\rm Sup}_0(\mathcal{H})/{\rm Sup}_0(\mathcal{A})=\mathbb{Z}(\{v_2\}),~~~ 
{\rm Sup}_1(\mathcal{H})/{\rm Sup}_1(\mathcal{A})=\mathbb{Z}(\{v_1,v_2\}-\{v_0,v_2\}),  
\\&{\rm Sup}_2(\mathcal{H})/{\rm Sup}_2(\mathcal{A})
=\mathbb{Z}(\{v_0,v_1,v_2\}) 
\end{eqnarray*}
where  the boundary maps of the  quotient chain complexes  are given by 
\begin{eqnarray*}
&\partial_0(\{v_2\})=0, ~~~ \partial_1(\{v_1,v_2\}-\{v_0,v_2\})=0,   \\
&\partial_2(\{v_0,v_1,v_2\})=\{v_1,v_2\}-\{v_0,v_2\}. 
\end{eqnarray*}
Therefore, 
$H_0(\mathcal{H},\mathcal{A})=\mathbb{Z}$  and  $H_1(\mathcal{H},\mathcal{A})=H_2(\mathcal{H},\mathcal{A})=0$.

\noindent  (ii). Let $\mathcal{A}'=\{\{v_0\},\{v_2\},\{v_0,v_1\}\}$.  Then 
\begin{eqnarray*}
&{\rm Inf}_0(\mathcal{A}')=\mathbb{Z}(\{v_0\},\{v_2\}),~~~
{\rm Inf}_1(\mathcal{A}')=
{\rm Inf}_2(\mathcal{A}')={\rm Sup}_2(\mathcal{A}')=0,\\
& {\rm Sup}_0(\mathcal{A}')=\mathbb{Z}(\{v_0\},\{v_2\},\{v_1\}-\{v_0\})=\mathbb{Z}(\{v_0\},\{v_1\},\{v_2\}), \\
& {\rm Sup}_1(\mathcal{A}')=\mathbb{Z}(\{v_0,v_1\}). 
\end{eqnarray*}
Hence 
\begin{eqnarray*}
&{\rm Inf}_0(\mathcal{H})/{\rm Inf}_0(\mathcal{A}')=\mathbb{Z}(\{v_1\}),~~~ 
{\rm Inf}_1(\mathcal{H})/{\rm Inf}_1(\mathcal{A}')=\mathbb{Z}(\{v_0,v_1\}), \\
& {\rm Inf}_2(\mathcal{H})/{\rm Inf}_2(\mathcal{A}')
={\rm Sup}_0(\mathcal{H})/{\rm Sup}_0(\mathcal{A}')=0,\\
&{\rm Sup}_1(\mathcal{H})/{\rm Sup}_1(\mathcal{A}')=\mathbb{Z}(\{v_1,v_2\}-\{v_0,v_2\}),  
\\&{\rm Sup}_2(\mathcal{H})/{\rm Sup}_2(\mathcal{A}')
=\mathbb{Z}(\{v_0,v_1,v_2\})  
\end{eqnarray*}
where  the boundary maps of the  quotient chain complexes  are given by 
\begin{eqnarray*}
&\partial_0(\{v_1\})=\partial_1(\{v_1,v_2\}-\{v_0,v_2\})=0,~~~ \partial_1(\{v_0,v_1\})=\{v_1\},\\
& \partial_2(\{v_0,v_1,v_2\})= \{v_1,v_2\}-\{v_0,v_2\}.  
\end{eqnarray*}
Therefore,  $H_0(\mathcal{H},\mathcal{A}')=H_1(\mathcal{H},\mathcal{A}')=H_2(\mathcal{H},\mathcal{A}')=0$.

\noindent  (iii).  Let $\mathcal{A}''=\{\{v_0,v_1\},\{v_0,v_1,v_2\}\}$.  Then 
\begin{eqnarray*}
&{\rm Inf}_0(\mathcal{A}'')= {\rm Inf}_1(\mathcal{A}'')={\rm Inf}_2(\mathcal{A}'')=0,~~~{\rm Sup}_0(\mathcal{A}'')=\mathbb{Z}(\{v_1\}-\{v_0\}),\\
&  {\rm Sup}_1(\mathcal{A}'')=\mathbb{Z}(\{v_0,v_1\}, \{v_1,v_2\}-\{v_0,v_2\}), ~~~ {\rm Sup}_2(\mathcal{A}'')=\mathbb{Z}(\{v_0,v_1,v_2\}). 
\end{eqnarray*}
Hence 
\begin{eqnarray*}
&{\rm Inf}_0(\mathcal{H})/{\rm Inf}_0(\mathcal{A}'')=\mathbb{Z}(\{v_0\},\{v_1\},\{v_2\}), ~~~
{\rm Inf}_1(\mathcal{H})/{\rm Inf}_1(\mathcal{A}'')=\mathbb{Z}(\{v_0,v_1\}),  
\\
&{\rm Inf}_2(\mathcal{H})/{\rm Inf}_2(\mathcal{A}'')={\rm Sup}_1(\mathcal{H})/{\rm  Sup}_1(\mathcal{A}'')={\rm Sup}_2(\mathcal{H})/{\rm  Sup}_2(\mathcal{A}'')=0,\\
&{\rm Sup}_0(\mathcal{H})/{\rm  Sup}_0(\mathcal{A}'')=\mathbb{Z}(\{v_0\}, \{v_2\})   \end{eqnarray*}
where  the boundary maps of the  quotient chain complexes  are given by 
\begin{eqnarray*}
\partial_0(\{v_0\})=\partial_0(\{v_1\})=\partial_0(\{v_2\})=0, ~~~ \partial_1(\{v_0,v_1\})=\{v_1\}-\{v_0\}. 
\end{eqnarray*}
Therefore, 
$H_0(\mathcal{H},\mathcal{A}'')=\mathbb{Z}\oplus\mathbb{Z}$  and  $H_1(\mathcal{H},\mathcal{A}'')=H_2(\mathcal{H},\mathcal{A}'')=0$.    
\end{example}

The relative embedded homology groups of the hypergraph pairs (i), (ii), (iii) of  Example~\ref{ex-2.1} are summarized in  the  next  table. 
{\footnotesize  
\begin{table}[ht]
\caption{Example~\ref{ex-2.1}}  
\centering  
\begin{tabular}{|c |c | c c c c |}  
\hline  
 name  & notation   & $n=0$ & $n=1$  & $n=2$ & $n\geq 3$  \\
\hline
&${\rm Inf}_n(\mathcal{H})/{\rm Inf}_n(\mathcal{A})$   &$\mathbb{Z}$& $0$ &$0$ &$0$\\
{\rm  infimum~chain~complex}&${\rm  Inf}_n(\mathcal{H})/{\rm Inf}_n(\mathcal{A}')$   &$\mathbb{Z}$ & $\mathbb{Z}$   &$0$ &$0$ \\
&${\rm  Inf}_n(\mathcal{H})/{\rm Inf}_n(\mathcal{A}'')$   & $\mathbb{Z}^{\oplus 3}$ & $\mathbb{Z}$ & $0$  &$0$ \\
\hline
&${\rm Sup}_n(\mathcal{H})/{\rm Sup}_n(\mathcal{A})$   &$\mathbb{Z}$&$\mathbb{Z}$&$\mathbb{Z}$&$0$\\
{\rm   supremum~chain~complex}&${\rm  Sup}_n(\mathcal{H})/{\rm Sup}_n(\mathcal{A}')$   &$0$ &$\mathbb{Z}$&$\mathbb{Z}$&$0$\\
&${\rm  Sup}_n(\mathcal{H})/{\rm Sup}_n(\mathcal{A}'')$   &$\mathbb{Z}\oplus\mathbb{Z}$&$0$&$0$&$0$\\
\hline
&  $H_n(\mathcal{H},\mathcal{A})$ & $\mathbb{Z}$ & $0$ & $0$ & $0$  \\
{\rm  embedded homology}& $H_n(\mathcal{H},\mathcal{A}')$ & $0$ & $0$ & $0$ & $0$  \\
&  $H_n(\mathcal{H},\mathcal{A}'')$ & $\mathbb{Z}\oplus\mathbb{Z}$ & $0$ & $0$ & $0$ \\  
\hline  
\end{tabular}
\label{x}  
\end{table}}The geometric realizations of the hypergraphs $\mathcal{H}$, $\mathcal{A}$,  $\mathcal{A}'$ and $\mathcal{A}''$  in (i), (ii), (iii)  of Example~\ref{ex-2.1}  are illustrated   in  Figure~1. 
\begin{figure}[!htbp] 
\label{fig-1}
 \begin{center}
\begin{tikzpicture}
 \coordinate[label=left:$\mathcal{H}$:] (G) at (0.5,2.5);
\coordinate      (A) at (1,1); 
 \coordinate    (B) at (4,1); 
 \coordinate      (C) at (3,3);
 \coordinate[label=left:$v_0$] (A) at (1,1);
 \coordinate[label=right:$v_1$] (B) at (4,1);
 \coordinate[label=right:$v_2$] (C) at (3,3);
\draw (1,1) circle (0.08); 
\draw (4,1) circle (0.08); 
\draw (3,3) circle (0.08); 
\fill (1,1) circle (2.2pt);
\fill (4,1) circle (2.2pt);
\fill (3,3) circle (2.2pt);
\draw [dotted] (B) -- (C);
\draw [dotted] (A) -- (C);
\draw [line width=1pt] (A) -- (B);
\fill [fill opacity=0.3][gray!100!white] (A) -- (B) -- (C) -- cycle;
\coordinate[label=left:$\mathcal{A}$:] (H) at (6.5,2.5);
\coordinate      (D) at (7,1); 
 \coordinate    (E) at (10,1); 
 \coordinate[label=left:$v_0$] (D) at (7,1);
 \coordinate[label=right:$v_1$] (E) at (10,1);
\draw (7,1) circle (0.08); 
\draw (10,1) circle (0.08); 
\fill (7,1) circle (2.2pt);
\fill (10,1) circle (2.2pt);
\draw [line width=1pt] (D) -- (E);
\coordinate[label=left:$\mathcal{A}'$:] (H') at (0.5,6);
\coordinate      (A') at (1,4.5); 
 \coordinate    (B') at (4,4.5); 
 \coordinate      (C') at (3,6.5);
 \coordinate[label=left:$v_0$] (A') at (1,4.5);
 \coordinate[label=right:$v_1$] (B') at (4,4.5);
 \coordinate[label=right:$v_2$] (C') at (3,6.5);
\draw (1,4.5) circle (0.08); 
\draw (4,4.5) circle (0.08); 
\draw (3,6.5) circle (0.08); 
\fill (1,4.5) circle (2.2pt);
\fill (3,6.5) circle (2.2pt);
\draw [line width=1pt] (A') -- (B');
\coordinate[label=left:$\mathcal{A}''$:] (H'') at (6.5,6);
\coordinate      (A'') at (7,4.5); 
 \coordinate    (B'') at (10,4.5); 
 \coordinate      (C'') at (9,6.5);
 \coordinate[label=left:$v_0$] (A'') at (7,4.5);
 \coordinate[label=right:$v_1$] (B'') at (10,4.5);
 \coordinate[label=right:$v_2$] (C'') at (9,6.5);
\draw (7,4.5) circle (0.08); 
\draw (10,4.5) circle (0.08); 
\draw (9,6.5) circle (0.08); 
\draw [dotted] (B'') -- (C'');
\draw [dotted] (A'') -- (C'');
\draw [line width=1pt] (A'') -- (B'');
\fill [fill opacity=0.3][gray!100!white] (A'') -- (B'') -- (C'') -- cycle;
\end{tikzpicture}
   
  \caption{Example~\ref{ex-2.1}}  
\end{center}
\end{figure}

\begin{example}\label{ex-2.2}
Let $\mathcal{A}=\{\{v_0\},\{v_1\},\{v_0,v_1\},\{v_0,v_1,v_2\}\}$.  Then 
\begin{eqnarray*}
&{\rm Inf}_0(\mathcal{A})={\rm Sup}_0(\mathcal{A})=\mathbb{Z}(\{v_0\},\{v_1\}),~~~
{\rm Inf}_1(\mathcal{A})=\mathbb{Z}(\{v_0, v_1\}), ~~~
{\rm Inf}_2(\mathcal{A})=0,\\
& 
{\rm Sup}_1(\mathcal{A})=\mathbb{Z}(\{v_0, v_1\}, \{v_1,v_2\}-\{v_0,v_2\}), ~~~
{\rm Sup}_2(\mathcal{A})=\mathbb{Z}(\{v_0,v_1,v_2\}). 
\end{eqnarray*}

\noindent  (i). Let $
\mathcal{H}= \{\{v_0\},\{v_1\},\{v_0,v_1\}, \{v_1,v_2\}, \{v_0,v_2\},\{v_0,v_1,v_2\}\}$. 
Then 
\begin{eqnarray*}
&{\rm Inf}_0(\mathcal{H})=\mathbb{Z}(\{v_0\},\{v_1\}),~~~
{\rm Inf}_1(\mathcal{H})=\mathbb{Z}(\{v_0, v_1\},\{v_0,v_2\}-\{v_1,v_2\}),\\
&{\rm Inf}_2(\mathcal{H})={\rm Sup}_2(\mathcal{H})=\mathbb{Z}(\{v_0,v_1,v_2\}),\\ 
&{\rm Sup}_0(\mathcal{H})=\mathbb{Z}(\{v_0\},\{v_1\},\{v_2\}),~~~
{\rm Sup}_1(\mathcal{H})=\mathbb{Z}(\{v_0, v_1\},\{v_0,v_2\},\{v_1,v_2\}).    
\end{eqnarray*}
Hence 
\begin{eqnarray*}
&{\rm Inf}_0(\mathcal{H})/{\rm Inf}_0(\mathcal{A})={\rm Sup}_2(\mathcal{H})/{\rm Sup}_2(\mathcal{A})=0,\\
&{\rm Inf}_1(\mathcal{H})/{\rm Inf}_1(\mathcal{A})=\mathbb{Z}(\{v_1,v_2\}-\{v_0,v_2\}),\\
&{\rm Inf}_2(\mathcal{H})/{\rm Inf}_2(\mathcal{A})=\mathbb{Z}(\{v_0,v_1,v_2\}), \\
&  {\rm Sup}_0(\mathcal{H})/{\rm Sup}_0(\mathcal{A})=\mathbb{Z}(\{v_2\}),\\
&{\rm Sup}_1(\mathcal{H})/{\rm Sup}_1(\mathcal{A})=\mathbb{Z}(\{v_0,v_2\}). 
\end{eqnarray*}
where  the boundary maps of the  quotient chain complexes  are given by 
\begin{eqnarray*}
&\partial_0(\{v_2\})=\partial_1(\{v_1,v_2\}-\{v_0,v_2\})=0,~~~ \partial_1(\{v_0,v_2\})=\{v_2\}, \\
& \partial_2(\{v_0,v_1,v_2\})= \{v_1,v_2\}-\{v_0,v_2\}.  
\end{eqnarray*}
Therefore,
$
H_0(\mathcal{H},\mathcal{A})=H_1(\mathcal{H},\mathcal{A})=H_2(\mathcal{H},\mathcal{A})=0$.

\noindent  (ii).  Let $
\mathcal{H}'= \{\{v_0\},\{v_1\},\{v_2\}, \{v_0,v_1\}, \{v_1,v_2\}, \{v_0,v_2\},\{v_0,v_1,v_2\}\}$.   Since $\mathcal{H}'$  is a simplicial complex,  we  have 
\begin{eqnarray*}
&{\rm Inf}_0(\mathcal{H}')={\rm Sup}_0(\mathcal{H}')=\mathbb{Z}(\{v_0\},\{v_1\},\{v_2\}),\\
&{\rm Inf}_1(\mathcal{H}')={\rm Sup}_1(\mathcal{H}')=\mathbb{Z}(\{v_0, v_1\},\{v_1,v_2\},\{v_0,v_2\}),\\
&{\rm Inf}_2(\mathcal{H}')={\rm Sup}_2(\mathcal{H}')=\mathbb{Z}(\{v_0,v_1,v_2\}). 
\end{eqnarray*}
Hence 
\begin{eqnarray*}
&{\rm Inf}_0(\mathcal{H}')/{\rm Inf}_0(\mathcal{A})={\rm  Sup}_0(\mathcal{H}')/{\rm  Sup}_0(\mathcal{A})=\mathbb{Z}(\{v_2\}),\\
&{\rm Inf}_1(\mathcal{H}')/{\rm Inf}_1(\mathcal{A})
=\mathbb{Z}(\{v_1,v_2\},\{v_0,v_2\}),\\
&{\rm Inf}_2(\mathcal{H}')/{\rm Inf}_2(\mathcal{A})=\mathbb{Z}(\{v_0,v_1,v_2\}), \\
&  {\rm  Sup}_1(\mathcal{H}')/{\rm  Sup}_1(\mathcal{A})=\mathbb{Z}(\{v_0,v_2\}),~~~
{\rm Sup}_2(\mathcal{H}')/{\rm Sup}_2(\mathcal{A})=0
\end{eqnarray*}
where  the boundary maps of the  quotient chain complexes  are given by 
\begin{eqnarray*}
&\partial_0(\{v_2\})=0,~~~ \partial_1(\{v_0,v_2\})=\partial_1(\{v_1,v_2\})=\{v_2\},\\
&\partial_2(\{v_0,v_1,v_2\})= \{v_1,v_2\}-\{v_0,v_2\}.  
\end{eqnarray*}
Therefore,
$
H_0(\mathcal{H}',\mathcal{A})=\mathbb{Z}$  and  $H_1(\mathcal{H}',\mathcal{A})= H_2(\mathcal{H}',\mathcal{A})=0$.

\noindent  (iii).  Let $
\mathcal{H}''= \{\{v_0\},\{v_1\},\{v_2\}, \{v_0,v_1\}, \{v_0,v_1,v_2\}\}$. 
Then 
\begin{eqnarray*}
&{\rm Inf}_0(\mathcal{H}'') ={\rm Sup}_0(\mathcal{H}'') = \mathbb{Z}(\{v_0\},\{v_1\},\{v_2\}), \\
&{\rm Inf}_1(\mathcal{H}'') = \mathbb{Z}(\{v_0, v_1\}), ~~~
{\rm Inf}_2(\mathcal{H}'') = 0, \\
&
{\rm Sup}_1(\mathcal{H}'') = \mathbb{Z}(\{v_0, v_1\},\{v_1,v_2\}-\{v_0,v_2\}), ~~~
{\rm Sup}_2(\mathcal{H}'') = \mathbb{Z}(\{v_0,v_1,v_2\}).   
\end{eqnarray*}
Hence 
\begin{eqnarray*}
&{\rm Inf}_0(\mathcal{H}'')/{\rm Inf}_0(\mathcal{A})={\rm Sup}_0(\mathcal{H}'')/{\rm Sup}_0(\mathcal{A})=\mathbb{Z}(\{v_2\}),\\
&{\rm Inf}_1(\mathcal{H}'')/{\rm Inf}_1(\mathcal{A})=
{\rm Inf}_2(\mathcal{H}'')/{\rm Inf}_2(\mathcal{A})={\rm Sup}_1(\mathcal{H}'')/{\rm Sup}_1(\mathcal{A})\\
&={\rm Sup}_2(\mathcal{H}'')/{\rm Sup}_2(\mathcal{A})=0
\end{eqnarray*}
where  the boundary maps of the  quotient chain complexes  are  the zero maps.   
Therefore,
$
H_0(\mathcal{H}'',\mathcal{A})=\mathbb{Z}$  and  $H_1(\mathcal{H}'',\mathcal{A})= H_2(\mathcal{H}'',\mathcal{A})=0$.   
\end{example}

The relative embedded homology groups of the hypergraph pairs (i), (ii), (iii)  of  Example~\ref{ex-2.2}  are summarized in the  next  table.  
{\footnotesize \begin{table}[ht]
\caption{Example~\ref{ex-2.2}}  
\centering  
\begin{tabular}{|c |c | c c c c |}  
\hline  
 name  & notation   & $n=0$ & $n=1$  & $n=2$ & $n\geq 3$  \\
\hline
&${\rm Inf}_n(\mathcal{H})/{\rm Inf}_n(\mathcal{A})$   &$0$& $\mathbb{Z}$ &$\mathbb{Z}$ &$0$\\
{\rm  infimum~chain~complex}&${\rm  Inf}_n(\mathcal{H}')/{\rm Inf}_n(\mathcal{A})$   &$\mathbb{Z}$ & $\mathbb{Z}\oplus\mathbb{Z}$   &$\mathbb{Z}$ &$0$ \\
&${\rm  Inf}_n(\mathcal{H}'')/{\rm Inf}_n(\mathcal{A})$   & $\mathbb{Z}$ & $0$ & $0$  &$0$ \\
\hline
&${\rm Sup}_n(\mathcal{H})/{\rm Sup}_n(\mathcal{A})$   &$\mathbb{Z}$&$\mathbb{Z}$&$0$&$0$\\
{\rm   supremum~chain~complex}&${\rm  Sup}_n(\mathcal{H}')/{\rm Sup}_n(\mathcal{A})$   &$\mathbb{Z}$ &$\mathbb{Z}$&$0$&$0$\\
&${\rm  Sup}_n(\mathcal{H}'')/{\rm Sup}_n(\mathcal{A})$   &$\mathbb{Z}$&$0$&$0$&$0$\\
\hline
&  $H_n(\mathcal{H},\mathcal{A})$ & $0$ & $0$ & $0$ & $0$  \\
{\rm  embedded homology}& $H_n(\mathcal{H}',\mathcal{A})$ & $\mathbb{Z}$ & $0$ & $0$ & $0$  \\
&  $H_n(\mathcal{H}'',\mathcal{A})$ & $\mathbb{Z}$ & $0$ & $0$ & $0$ \\ 
\hline  
\end{tabular}
\label{xxx}  
\end{table}}The geometric realizations of the hypergraphs $\mathcal{A}$, $\mathcal{H}$,  $\mathcal{H}'$ and $\mathcal{H}''$  in (i), (ii), (iii)   of  Example~\ref{ex-2.2}  are illustrated   in  Figure~2. 
\begin{figure}[!htbp] 
\label{fig-2}
 \begin{center}
\begin{tikzpicture}
 \coordinate[label=left:$\mathcal{A}$:] (G) at (0.5,2.5);
\coordinate      (A) at (1,1); 
 \coordinate    (B) at (4,1); 
 \coordinate      (C) at (3,3);
 \coordinate[label=left:$v_0$] (A) at (1,1);
 \coordinate[label=right:$v_1$] (B) at (4,1);
 \coordinate[label=right:$v_2$] (C) at (3,3);
\draw (1,1) circle (0.08); 
\draw (4,1) circle (0.08); 
\draw (3,3) circle (0.08); 
\fill (1,1) circle (2.2pt);
\fill (4,1) circle (2.2pt);
\draw [dotted] (B) -- (C);
\draw [dotted] (A) -- (C);
\draw [line width=1pt] (A) -- (B);
\fill [fill opacity=0.3][gray!100!white] (A) -- (B) -- (C) -- cycle;
\coordinate[label=left:$\mathcal{H}$:] (H) at (6.5,2.5);
\coordinate      (D) at (7,1); 
 \coordinate    (E) at (10,1); 
 \coordinate[label=right:$v_2$] (F) at (9,3);
 \coordinate[label=left:$v_0$] (D) at (7,1);
 \coordinate[label=right:$v_1$] (E) at (10,1);
 \coordinate[label=right:$v_2$] (F) at (9,3);
\draw (7,1) circle (0.08); 
\draw (10,1) circle (0.08); 
\draw (9,3) circle (0.08); 
\fill (7,1) circle (2.2pt);
\fill (10,1) circle (2.2pt);
\draw [line width=1pt] (D) -- (E);
\draw [line width=1pt] (D) -- (F);
\draw [line width=1pt] (F) -- (E);
\fill [fill opacity=0.3][gray!100!white] (D) -- (E) -- (F) -- cycle;
\coordinate[label=left:$\mathcal{H}'$:] (H') at (0.5,6);
\coordinate      (A') at (1,4.5); 
 \coordinate    (B') at (4,4.5); 
 \coordinate      (C') at (3,6.5);
 \coordinate[label=left:$v_0$] (A') at (1,4.5);
 \coordinate[label=right:$v_1$] (B') at (4,4.5);
 \coordinate[label=right:$v_2$] (C') at (3,6.5);
\draw (1,4.5) circle (0.08); 
\draw (4,4.5) circle (0.08); 
\draw (3,6.5) circle (0.08); 
\fill (1,4.5) circle (2.2pt);
\fill (3,6.5) circle (2.2pt);
\fill (4,4.5) circle (2.2pt);
\draw [line width=1pt] (A') -- (B');
\draw [line width=1pt] (B') -- (C');
\draw [line width=1pt] (A') -- (C');
\fill [fill opacity=0.3][gray!100!white] (A') -- (B') -- (C') -- cycle;
\coordinate[label=left:$\mathcal{H}''$:] (H'') at (6.5,6);
\coordinate      (A'') at (7,4.5); 
 \coordinate    (B'') at (10,4.5); 
 \coordinate      (C'') at (9,6.5);
 \coordinate[label=left:$v_0$] (A'') at (7,4.5);
 \coordinate[label=right:$v_1$] (B'') at (10,4.5);
 \coordinate[label=right:$v_2$] (C'') at (9,6.5);
\draw (7,4.5) circle (0.08); 
\draw (10,4.5) circle (0.08); 
\draw (9,6.5) circle (0.08); 
\fill (7,4.5) circle (2.2pt); 
\fill (10,4.5) circle (2.2pt); 
\fill (9,6.5) circle (2.2pt); 
\draw [dotted] (B'') -- (C'');
\draw [dotted] (A'') -- (C'');
\draw [line width=1pt] (A'') -- (B'');
\fill [fill opacity=0.3][gray!100!white] (A'') -- (B'') -- (C'') -- cycle;
\end{tikzpicture}
  \caption{Example~\ref{ex-2.2}}  
\end{center}
\end{figure}

For any $n\geq 1$,  let $[n]$ be the hypergraph consisting of a single hyperedge $\{v_0,v_1,\cdots,v_n\}$ and  let $\Delta[n]$ be the associated simplicial complex  of $[n]$.  Note that the lower-associated simplicial complex $\delta[n]$ of $[n]$  is the empty-set.   
\begin{example}\label{ex-2.3}
Let $\mathcal{H}=\{\sigma\in \Delta[3]\mid \dim\sigma\leq 2\}$.  Then $\mathcal{H}$ is a simplicial complex.  Thus 
\begin{eqnarray*}
&{\rm Inf}_0(\mathcal{H})={\rm Sup}_0(\mathcal{H})=\mathbb{Z}(\{v_0\},\{v_1\},\{v_2\},\{v_3\}),\\
&{\rm Inf}_1(\mathcal{H}) = {\rm Sup}_1(\mathcal{H})=\mathbb{Z}(\{v_0,v_1\},\{v_0,v_2\},\{v_0,v_3\},\{v_1,v_2\},\{v_1,v_3\},\{v_2,v_3\}),\\
&{\rm Inf}_2(\mathcal{H})={\rm Sup}_2(\mathcal{H})=\mathbb{Z}(\{v_0,v_1,v_2\},\{v_0,v_1,v_3\},\{v_0,v_2,v_3\},\{v_1,v_2,v_3\}). 
\end{eqnarray*}

\noindent  (i).  Let $\mathcal{A}=\{\sigma\in\mathcal{H}\mid   1\leq \dim\sigma\leq 2\}$.  Then 
\begin{eqnarray*}
&{\rm Inf}_1(\mathcal{A})=\mathbb{Z}(\{v_1,v_2\}-\{v_0,v_2\}+\{v_0,v_1\},\{v_1,v_3\}-\{v_0,v_3\}+\{v_0,v_1\},\\
& \{v_2,v_3\}-\{v_0,v_3\}+\{v_0,v_2\}),\\
& {\rm Inf}_0(\mathcal{A})=0,\\
&{\rm Inf}_2(\mathcal{A})={\rm Sup}_2(\mathcal{A})=\mathbb{Z}(\{v_0,v_1,v_2\},\{v_0,v_1,v_3\},\{v_0,v_2,v_3\},\{v_1,v_2,v_3\}),\\
& {\rm Sup}_0(\mathcal{A})=\mathbb{Z}(\{v_j\}-\{v_i\}\mid  0\leq i<j\leq 3)\\
&=\mathbb{Z}(\{v_1\}-\{v_0\}, \{v_2\}-\{v_0\}, \{v_3\}-\{v_0\}),\\
&{\rm Sup}_1(\mathcal{A})=\mathbb{Z}(\{v_0,v_1\},\{v_0,v_2\},\{v_0,v_3\},\{v_1,v_2\},\{v_1,v_3\},\{v_2,v_3\}).  
\end{eqnarray*}
Hence %\footnote[3]{The expression of ${\rm Inf}_1(\mathcal{H})/{\rm Inf}_1(\mathcal{A})$ follows from that 
%\begin{eqnarray*}
%{\rm Inf}_1(\mathcal{H})&=&\mathbb{Z}(\{v_0,v_1\},\{v_0,v_2\},\{v_0,v_3\},\{v_1,v_2\},\{v_1,v_3\},\{v_2,v_3\})\\
%&=&\mathbb{Z}(\{v_0,v_1\}-\{v_0,v_2\}+\{v_1,v_2\},\{v_0,v_2\},\{v_0,v_3\},\{v_1,v_2\},\{v_1,v_3\},\{v_2,v_3\}-\{v_0,v_3\}+\{v_0,v_2\})\\
%&=&\mathbb{Z}(\{v_0,v_1\}-\{v_0,v_2\}+\{v_1,v_2\},\{v_0,v_2\},\{v_0,v_3\},\{v_1,v_2\},\\
%&&\{v_1,v_3\}-\{v_0,v_3\}-\{v_1,v_2\}+\{v_0,v_2\},\{v_2,v_3\}-\{v_0,v_3\}+\{v_0,v_2\})
%\end{eqnarray*}
%and 
%\begin{eqnarray*}
%{\rm Inf}_1(\mathcal{A})&=&\mathbb{Z}(\{v_1,v_2\}-\{v_0,v_2\}+\{v_0,v_1\},\{v_1,v_3\}-\{v_0,v_3\}+\{v_0,v_1\}, \{v_2,v_3\}-\{v_0,v_3\}+\{v_0,v_2\})\\
%&=&\mathbb{Z}(\{v_1,v_2\}-\{v_0,v_2\}+\{v_0,v_1\},\{v_1,v_3\}-\{v_0,v_3\}-\{v_1,v_2\}+\{v_0,v_2\}, \{v_2,v_3\}-\{v_0,v_3\}+\{v_0,v_2\}). 
%\end{eqnarray*}
%}
\begin{eqnarray*}
&{\rm Inf}_0(\mathcal{H})/{\rm Inf}_0(\mathcal{A}) =\mathbb{Z}(\{v_0\},\{v_1\},\{v_2\},\{v_3\}),~~~{\rm Inf}_2(\mathcal{H})/{\rm Inf}_2(\mathcal{A}) = 0\\
&{\rm Inf}_1(\mathcal{H})/{\rm Inf}_1(\mathcal{A}) =\mathbb{Z}(\{v_0,v_2\},\{v_0,v_3\},\{v_1,v_2\}) 
\end{eqnarray*}  
where the boundary maps in the quotient infimum chain complex are given by
\begin{eqnarray*}
&\partial_1(\{v_0,v_2\})=\{v_2\}-\{v_0\}, ~~~ \partial_1(\{v_0,v_3\})=\{v_3\}-\{v_0\},\\
&\partial_1(\{v_1,v_2\})=\{v_2\}-\{v_1\}, ~~~
 \partial_0(\{v_i\})=0 ~{\rm~for~}0\leq i\leq 3. 
\end{eqnarray*}
On the other hand, 
\begin{eqnarray*}
& {\rm Sup}_0(\mathcal{H})/{\rm Sup}_0(\mathcal{A})  =\mathbb{Z}(\{v_0\}), ~~~
{\rm Sup}_1(\mathcal{H})/{\rm Sup}_1(\mathcal{A})= {\rm Sup}_2(\mathcal{H})/{\rm Sup}_2(\mathcal{A})= 0
\end{eqnarray*}  
where the boundary maps in the quotient supremum chain complex are given by
$
\partial_0(\{v_0\})=0$.  Therefore,  
$H_0(\mathcal{H},\mathcal{A})=\mathbb{Z}$  and    $H_1(\mathcal{H},\mathcal{A})=H_2(\mathcal{H},\mathcal{A})=0$.

\noindent  (ii). Let  $\mathcal{A}'=\{\sigma\in\mathcal{H}\mid \dim\sigma\leq 1\}$. Then $(\mathcal{H},\mathcal{A}')$  is  a  simplicial complex pair.  Thus
\begin{eqnarray*}
&{\rm Inf}_0(\mathcal{A}')={\rm Sup}_0(\mathcal{A}')=\mathbb{Z}(\{v_0\},\{v_1\},\{v_2\},\{v_3\}),\\
&{\rm Inf}_1(\mathcal{A}') = {\rm Sup}_1(\mathcal{A}')=\mathbb{Z}(\{v_0,v_1\},\{v_0,v_2\},\{v_0,v_3\},\{v_1,v_2\},\{v_1,v_3\},\{v_2,v_3\}) 
\end{eqnarray*}
and 
\begin{eqnarray*}
&{\rm Inf}_0(\mathcal{H})/{\rm Inf}_0(\mathcal{A}')={\rm Sup}_0(\mathcal{H})/{\rm Sup}_0(\mathcal{A}')={\rm Inf}_1(\mathcal{H})/{\rm Inf}_1(\mathcal{A}')\\
&={\rm Sup}_1(\mathcal{H})/{\rm Sup}_1(\mathcal{A}')=0, \\
&{\rm Inf}_2(\mathcal{H})/{\rm Inf}_2(\mathcal{A}')={\rm Sup}_2(\mathcal{H})/{\rm Sup}_2(\mathcal{A}')\\
&=\mathbb{Z}(\{v_0,v_1,v_2\},\{v_0,v_1,v_3\},\{v_0,v_2,v_3\},\{v_1,v_2,v_3\}). 
\end{eqnarray*}
The relative embedded homology   groups  of $(\mathcal{H},\mathcal{A}')$  reduce  to the usual relative homology  groups of  simplicial complex pairs.  That is,    
$H_0(\mathcal{H},\mathcal{A}')=H_1(\mathcal{H},\mathcal{A}')=0$  and  $H_2(\mathcal{H},\mathcal{A}')=\mathbb{Z}^{\oplus 4}$.

\noindent  (iii).  Let $\mathcal{A}''=\{\sigma\in\mathcal{H}\mid \dim\sigma\neq 1\}$.  Then 
\begin{eqnarray*}
 &{\rm Inf}_0(\mathcal{A}'')={\rm Sup}_0(\mathcal{A}'')=\mathbb{Z}(\{v_0\},\{v_1\},\{v_2\},\{v_3\}),~~~{\rm Inf}_1(\mathcal{A}'')=0,\\
 &{\rm Inf}_2(\mathcal{A}'')=\mathbb{Z}(\{v_1,v_2,v_3\}-\{v_0,v_2,v_3\}+\{v_0,v_1,v_3\}-\{v_0,v_1,v_2\}),\\
&{\rm Sup}_1(\mathcal{A}'')=\mathbb{Z}(\{v_1,v_2\}-\{v_0,v_2\}+\{v_0,v_1\},\\
&\{v_1,v_3\}-\{v_0,v_3\}+\{v_0,v_1\}, \{v_2,v_3\}-\{v_0,v_3\}+\{v_0,v_2\}),\\
&{\rm Sup}_2(\mathcal{A}'')=\mathbb{Z}(\{v_0,v_1,v_2\},\{v_0,v_1,v_3\},\{v_0,v_2,v_3\},\{v_1,v_2,v_3\}). 
\end{eqnarray*} 
Hence 
\begin{eqnarray*}
&{\rm Inf}_0(\mathcal{H})/{\rm Inf}_0(\mathcal{A}'')={\rm  Sup}_0(\mathcal{H})/{\rm  Sup}_0(\mathcal{A}'')={\rm  Sup}_2(\mathcal{H})/{\rm  Sup}_2(\mathcal{A}'')=0,\\
&{\rm Inf}_1(\mathcal{H})/{\rm Inf}_1(\mathcal{A}'')=\mathbb{Z}(\{v_0,v_1\},\{v_0,v_2\},\{v_0,v_3\},\{v_1,v_2\},\{v_1,v_3\},\{v_2,v_3\}),\\
&{\rm Inf}_2(\mathcal{H})/{\rm Inf}_2(\mathcal{A}'')=\mathbb{Z}(\{v_0,v_1,v_2\},\{v_0,v_1,v_3\},\{v_0,v_2,v_3\}),\\
&{\rm Sup}_1(\mathcal{H})/{\rm Sup}_1(\mathcal{A}'')=\mathbb{Z}(\{v_0,v_2\}, \{v_0,v_3\},\{v_1, v_2\})  
\end{eqnarray*}
where the boundary maps in the quotient chain complexes are given by
\begin{eqnarray*}
&\partial_1(\{v_i,v_j\})=0 ~{\rm~for~}0\leq i<j\leq 3;\\
& \partial_2(\{v_i,v_j,k\})=\{v_j,v_k\}-\{v_i,v_k\}+\{v_i,v_j\} ~{\rm~for~}0\leq i<j<k\leq 3.  
\end{eqnarray*}
Therefore,
$H_0(\mathcal{H},\mathcal{A}'')=0$, $H_1(\mathcal{H},\mathcal{A}'')=\mathbb{Z}^{\oplus 3}$  and  $H_2(\mathcal{H},\mathcal{A}'')=0$.

\noindent  (iv). Let $\mathcal{A}'''=\{\sigma\in\mathcal{H}\mid \dim\sigma=1\}$.  Then  
\begin{eqnarray*}
&  {\rm Inf}_0(\mathcal{A}''')={\rm Inf}_2(\mathcal{A}''')={\rm Sup}_2(\mathcal{A}''')=0,\\
&  {\rm Inf}_1(\mathcal{A}''')= \mathbb{Z}(\{v_1,v_2\}-\{v_0,v_2\}+\{v_0,v_1\},\\
&\{v_1,v_3\}-\{v_0,v_3\}+\{v_0,v_1\}, \{v_2,v_3\}-\{v_0,v_3\}+\{v_0,v_2\}),\\
&{\rm Sup}_0(\mathcal{A}''')=\mathbb{Z}(\{v_j\}-\{v_i\}\mid  0\leq i<j\leq 3)\\
&=\mathbb{Z}(\{v_1\}-\{v_0\}, \{v_2\}-\{v_0\}, \{v_3\}-\{v_0\}),\\
&{\rm Sup}_1(\mathcal{A}''')=\mathbb{Z}(\{v_0,v_1\},\{v_0,v_2\},\{v_0,v_3\},\{v_1,v_2\},\{v_1,v_3\},\{v_2,v_3\}). 
\end{eqnarray*}
Hence 
\begin{eqnarray*}
&{\rm Inf}_0(\mathcal{H})/{\rm Inf}_0(\mathcal{A}''')=\mathbb{Z}(\{v_0\},\{v_1\},\{v_2\},\{v_3\}),\\
&{\rm Inf}_1(\mathcal{H})/{\rm Inf}_1(\mathcal{A}''') =\mathbb{Z}(\{v_0,v_2\},\{v_0,v_3\},\{v_1,v_2\}), \\
&{\rm Inf}_2(\mathcal{H})/{\rm Inf}_2(\mathcal{A}''')=\mathbb{Z}(\{v_0,v_1,v_2\},\{v_0,v_1,v_3\},\{v_0,v_2,v_3\},\{v_1,v_2,v_3\})
\end{eqnarray*}
where the boundary maps in the quotient infimum chain complex are given by
\begin{eqnarray*}
&\partial_0(\{v_i\})=0 ~{\rm~for~} 0\leq i\leq 3, \\
&\partial_1(\{v_0,v_2\})=\{v_2\}-\{v_0\},~~~ \partial_1(\{v_0,v_3\})=\{v_3\}-\{v_0\},~~~ \partial_1(\{v_1,v_2\})=\{v_2\}-\{v_1\},\\
&\partial_2(\{v_0,v_1,v_2\})=  \partial_2(\{v_0,v_1,v_3\})=\partial_2(\{v_0,v_2,v_3\})=\partial_2(\{v_1,v_2,v_3\})=0. 
\end{eqnarray*}
On the other hand, 
\begin{eqnarray*}
&{\rm  Sup}_0(\mathcal{H})/{\rm  Sup}_0(\mathcal{A}''')=\mathbb{Z}(\{v_0\}),~~~{\rm  Sup}_1(\mathcal{H})/{\rm  Sup}_1(\mathcal{A}''')=0,\\
&{\rm  Sup}_2(\mathcal{H})/{\rm  Sup}_2(\mathcal{A}''')=\mathbb{Z}(\{v_0,v_1,v_2\},\{v_0,v_1,v_3\},\{v_0,v_2,v_3\},\{v_1,v_2,v_3\}) 
\end{eqnarray*}
where the boundary maps in the quotient chain complexes are given by
\begin{eqnarray*}
&\partial_0(\{v_0\})=0,~~~
& \partial_2(\{v_i,v_j,k\})=0 ~{\rm~for~}0\leq i<j<k\leq 3.  
\end{eqnarray*}
Therefore,  
$
H_0(\mathcal{H},\mathcal{A}''')=\mathbb{Z}$,   $H_1(\mathcal{H},\mathcal{A}''')=0$  and  $H_2(\mathcal{H},\mathcal{A}''')=\mathbb{Z}^{\oplus 4} $.   

\end{example}

The relative embedded homology groups of the hypergraph pairs (i), (ii), (iii), (iv) of Example~\ref{ex-2.3} are summarized in  the  next  table. 

\begin{table}[ht]

\caption{Example~\ref{ex-2.3}}  
\centering  
\begin{tabular}{|c |c | c c c c |}  
\hline  
 name  & notation   & $n=0$ & $n=1$  & $n=2$ & $n\geq 3$  \\
\hline
&${\rm Inf}_n(\mathcal{H})/{\rm Inf}_n(\mathcal{A})$   &$\mathbb{Z}^{\oplus 4}$& $\mathbb{Z}^{\oplus 3}$ &$0$ &$0$\\
{\rm  infimum~chain~complex}&${\rm  Inf}_n(\mathcal{H})/{\rm Inf}_n(\mathcal{A}')$   &$0$ & $0$   &$\mathbb{Z}^{\oplus 4}$ &$0$ \\
&${\rm  Inf}_n(\mathcal{H})/{\rm Inf}_n(\mathcal{A}'')$   & $0$ & $\mathbb{Z}^{\oplus 6}$ & $\mathbb{Z}^{\oplus 3}$  &$0$ \\
&${\rm  Inf}_n(\mathcal{H})/{\rm Inf}_n(\mathcal{A}''')$   & $\mathbb{Z}^{\oplus 4}$ & $\mathbb{Z}^{\oplus 3}$ & $\mathbb{Z}^{\oplus 4}$  &$0$ \\
\hline
&${\rm Sup}_n(\mathcal{H})/{\rm Sup}_n(\mathcal{A})$   &$\mathbb{Z}$&$0$&$0$&$0$\\
{\rm   supremum~chain~complex}&${\rm  Sup}_n(\mathcal{H})/{\rm Sup}_n(\mathcal{A}')$   &$0$ & $0$   &$\mathbb{Z}^{\oplus 4}$ &$0$\\
&${\rm  Sup}_n(\mathcal{H})/{\rm Sup}_n(\mathcal{A}'')$   &$0$&$\mathbb{Z}^{\oplus 3}$&$0$&$0$\\
&${\rm  Sup}_n(\mathcal{H})/{\rm Sup}_n(\mathcal{A}''')$   &$\mathbb{Z}$&$0$&$\mathbb{Z}^{\oplus 4}$&$0$\\
\hline
&  $H_n(\mathcal{H},\mathcal{A})$ & $\mathbb{Z}$ & $0$ & $0$ & $0$  \\
{\rm  embedded homology}& $H_n(\mathcal{H},\mathcal{A}')$ & $0$ & $0$   &$\mathbb{Z}^{\oplus 4}$ & $0$  \\
&  $H_n(\mathcal{H},\mathcal{A}'')$ & $0$ & $\mathbb{Z}^{\oplus  3}$ & $0$ & $0$ \\
&  $H_n(\mathcal{H},\mathcal{A}''')$ & $\mathbb{Z}$ & $0$ & $\mathbb{Z}^{\oplus 4}$ & $0$ \\  
\hline  
\end{tabular}
\label{xxxxxx}  
\end{table}

\section{Morphisms of hypergraph  pairs  and  homomorphisms of  the  homology}\label{s4x}

In  this section,  we  define the morphisms of hypergraph pairs and  prove that such a morphism induces   homomorphisms  of the relative embedded homology groups.  Moreover,  we  give  some long exact sequences  of homology groups   for hypergraph pairs.

\subsection{Morphisms of  hypergraph  pairs and  the  induced  homomorphisms of  the  embedded  homology}\label{ss4.1}

Let $V$  and $V'$  be  discrete sets.  
Let $\mathcal{H}$ be a hypergraph with vertices from $V$  and $\mathcal{H}'$ be a hypergraph with vertices from $V'$.   Let $\mathcal{A}$ be a sub-hypergraph of $\mathcal{H}$ and $\mathcal{A}'$  be a sub-hypergraph of $\mathcal{H}'$.    By Definition~\ref{def-cmahg},  we  have hypergraph pairs $(\mathcal{H},\mathcal{A})$  and  $(\mathcal{H}',\mathcal{A}')$.  

\begin{definition}
A {\it morphism  $f: (\mathcal{H},\mathcal{A})\longrightarrow (\mathcal{H}',\mathcal{A}')$ of hypergraph pairs} is a morphism $f: \mathcal{H}\longrightarrow \mathcal{H}'$ of hypergraphs such that    
   for any hyperedge $\sigma$ of  $\mathcal{A}$,  its  image $f(\sigma)$ is a hyperedge of $\mathcal{A}'$. 
\end{definition}

It can be verified that  all  the  chain  maps  in  the commutative diagram (\ref{eq-2.1})  are functorial with respect to morphisms of hypergraph pairs.  Thus  by   generalizing \cite[Proposition~3.7]{hg1},   it  follows    in the next lemma   that  the relative embedded homology  groups  are    functorial   as  well.   

\begin{lemma}
A morphism  $f: (\mathcal{H},\mathcal{A})\longrightarrow (\mathcal{H}',\mathcal{A}')$ of hypergraph pairs induces a homomorphism $f_*: H_*(\mathcal{H},\mathcal{A})\longrightarrow H_*(\mathcal{H}',\mathcal{A}')$  of relative embedded homology  groups.   
\end{lemma}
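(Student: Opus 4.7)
The plan is to reduce the claim to the functoriality of the infimum chain complex construction, so that everything follows by passage to quotients and then to homology. The morphism $f:(\mathcal{H},\mathcal{A})\to(\mathcal{H}',\mathcal{A}')$ is by definition a vertex map $V_{\mathcal{H}}\to V_{\mathcal{H}'}$ sending $V_{\mathcal{A}}$ into $V_{\mathcal{A}'}$, carrying hyperedges of $\mathcal{H}$ to hyperedges of $\mathcal{H}'$ and hyperedges of $\mathcal{A}$ to hyperedges of $\mathcal{A}'$. First I would observe that $f$ extends to a simplicial map $\Delta f:\Delta\mathcal{H}\to\Delta\mathcal{H}'$ (and $\Delta f:\Delta\mathcal{A}\to\Delta\mathcal{A}'$), since any face of a hyperedge is a subset whose image is a subset of the image hyperedge, hence an element of the associated simplicial complex on the target side. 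This gives a commutative square of chain maps between $C_*(\Delta\mathcal{A})\to C_*(\Delta\mathcal{H})$ and $C_*(\Delta\mathcal{A}')\to C_*(\Delta\mathcal{H}')$ using the standard simplicial-map convention (send a simplex to its image, or to zero if $f$ collapses vertices).

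Next I would check that these chain maps send $G(\mathcal{H})_n$ into $G(\mathcal{H}')_n$ and $G(\mathcal{A})_n$ into $G(\mathcal{A}')_n$. For a generator $\sigma\in\mathcal{H}$ of dimension $n$, either $f$ restricted to $\sigma$ is injective, in which case $f(\sigma)$ is an $n$-hyperedge of $\mathcal{H}'$ by hypothesis, or $f|_\sigma$ is non-injective, in which case the simplicial-map convention sends $\sigma$ to $0\in G(\mathcal{H}')_n$. Either way the image lies in $G(\mathcal{H}')_n$, and the same argument applies for $\mathcal{A}$ and $\mathcal{A}'$. Combined with the fact that $f$ commutes with the boundary $\partial_*$, this shows the induced chain map restricts to
\begin{eqnarray*}
\mathrm{Inf}_n(\mathcal{H}) = G(\mathcal{H})_n\cap\partial_n^{-1}G(\mathcal{H})_{n-1}\ \longrightarrow\ G(\mathcal{H}')_n\cap\partial_n^{-1}G(\mathcal{H}')_{n-1}=\mathrm{Inf}_n(\mathcal{H}'),
\end{eqnarray*}
and similarly on $\mathrm{Inf}_*(\mathcal{A})$, yielding a commutative square of inclusions $\mathrm{Inf}_*(\mathcal{A})\hookrightarrow\mathrm{Inf}_*(\mathcal{H})$ intertwined with $\mathrm{Inf}_*(\mathcal{A}')\hookrightarrow\mathrm{Inf}_*(\mathcal{H}')$.

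Passing to quotients then gives a well-defined chain map
\begin{eqnarray*}
\mathrm{Inf}(f):\ \mathrm{Inf}_*(\mathcal{H})/\mathrm{Inf}_*(\mathcal{A})\ \longrightarrow\ \mathrm{Inf}_*(\mathcal{H}')/\mathrm{Inf}_*(\mathcal{A}'),
\end{eqnarray*}
and applying the homology functor produces the desired $f_*:H_*(\mathcal{H},\mathcal{A})\to H_*(\mathcal{H}',\mathcal{A}')$. I expect no substantial obstacle; the only subtle point is the bookkeeping at the moment of invoking the simplicial-map convention for vertex-collapsing morphisms, but this is standard and already implicit in the functoriality of the absolute embedded homology proved in \cite[Proposition~3.7]{hg1}, which the present argument generalizes almost verbatim by running the same construction in parallel on $\mathcal{A}$ and $\mathcal{H}$ and taking the induced map on quotients.
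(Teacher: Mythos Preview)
Your proposal is correct and takes essentially the same approach as the paper, which in fact gives no detailed proof at all: the lemma is stated with a \qed after the one-line remark that the diagram (\ref{eq-2.1}) is functorial with respect to morphisms of hypergraph pairs, generalizing \cite[Proposition~3.7]{hg1}. Your argument is precisely a careful unpacking of that remark on the $\mathrm{Inf}$ row, including the vertex-collapsing bookkeeping, so there is nothing to add.
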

\begin{proof}
 Let  $f: (\mathcal{H},\mathcal{A})\longrightarrow (\mathcal{H}',\mathcal{A}')$   be  a  morphism of hypergraph pairs.   Then  we  have a commutative diagram of chain complexes 
 {\footnotesize \begin{eqnarray*}
 \xymatrix{
 C_*(\Delta\mathcal{H})/C_*(\Delta\mathcal{A}) \ar[r] \ar[d] & {\rm Sup}_*(\mathcal{H})/{\rm Sup}_*(\mathcal{A}) \ar[r]  \ar[d] & {\rm Inf}_*(\mathcal{H})/{\rm Inf}_*(\mathcal{A}) \ar[r] \ar[d] & C_*(\delta\mathcal{H})/C_*(\delta\mathcal{A})  \ar[d] \\
C_*(\Delta\mathcal{H}')/C_*(\Delta\mathcal{A}') \ar[r]   & {\rm Sup}_*(\mathcal{H}')/{\rm Sup}_*(\mathcal{A}') \ar[r]    & {\rm Inf}_*(\mathcal{H}')/{\rm Inf}_*(\mathcal{A}') \ar[r]   & C_*(\delta\mathcal{H}')/C_*(\delta\mathcal{A}').       
 }
 \end{eqnarray*}}In the last diagram, all the maps are chain  maps.  Applying the homology functor,  for each $n\geq  0$  we  have a commutative diagram of homology groups 
{\footnotesize \begin{eqnarray*} 
 \xymatrix{
 H_n(\Delta\mathcal{H},\Delta\mathcal{A}) \ar[r] \ar[d] & H_n({\rm Sup}_*(\mathcal{H})/{\rm Sup}_*(\mathcal{A})) \ar[r]^{\cong }  \ar[d] & H_n({\rm Inf}_*(\mathcal{H})/{\rm Inf}_*(\mathcal{A})) \ar[r] \ar[d] & H_n (\delta\mathcal{H},\delta\mathcal{A})  \ar[d] \\
H_n( \Delta\mathcal{H}',\Delta\mathcal{A}') \ar[r]   & H_n({\rm Sup}_*(\mathcal{H}')/{\rm Sup}_*(\mathcal{A}')) \ar[r] ^{\cong}   & H_n( {\rm Inf}_*(\mathcal{H}')/{\rm Inf}_*(\mathcal{A}') )\ar[r]   & H_n(\delta\mathcal{H}',\delta\mathcal{A}').       
 }
 \end{eqnarray*}}In the last diagram,  the second map in the first row as well as the second map in the second row is  an isomorphism.  Thus the second vertical map and the third  vertical map can  be  identified.    We  denote this map as  $f_*$.   By  Definition~\ref{def-2.2},  for each $n\geq  0$   the map $f_*$  is a homomorphism from the relative embedded homology group $H_n(\mathcal{H},\mathcal{A})$  to the relative embedded homology group  $H_n(\mathcal{H}',\mathcal{A}')$.   
\end{proof}

The  relative embedded homology groups of hypergraph pairs satisfy  the Eilenberg-Steenrod  Axiom 3 (cf. \cite[p. 146]{eat}):  

\begin{proposition}\label{le-3.a}
Let   $f: (\mathcal{H},\mathcal{A})\longrightarrow (\mathcal{H}',\mathcal{A}')$   be   a morphism of hypergraph pairs.   Then the following diagram commutes: 
{\footnotesize\begin{eqnarray*}
\xymatrix{
H_n(\mathcal{H},\mathcal{A})\ar[r]^{f_*}\ar[d]^{\partial_*} & H_n(\mathcal{H}',\mathcal{A}')\ar[d]^{\partial_*}\\
H_{n-1}(\mathcal{A})\ar[r]^{(f\mid_{\mathcal{A}})_*} & H_{n-1}(\mathcal{A}'). }
\end{eqnarray*}}
\end{proposition}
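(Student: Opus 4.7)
The plan is to reduce the proposition to the standard naturality of the connecting homomorphism associated to a morphism of short exact sequences of chain complexes. The hypergraph pair morphism $f: (\mathcal{H},\mathcal{A})\longrightarrow (\mathcal{H}',\mathcal{A}')$ sends hyperedges of $\mathcal{H}$ (resp.\ $\mathcal{A}$) to hyperedges of $\mathcal{H}'$ (resp.\ $\mathcal{A}'$), and on images of simplices it either preserves or drops dimension. Working with a fixed ground field (or with an appropriate convention on degenerate images) so that $f$ extends linearly to a chain map $C_*(\Delta\mathcal{H})\longrightarrow C_*(\Delta\mathcal{H}')$, the image of any chain in $G(\mathcal{H})_*$ lies in $G(\mathcal{H}')_*$, and similarly for $\mathcal{A}$. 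Hence $f$ restricts to chain maps $\mathrm{Inf}_*(\mathcal{H})\longrightarrow \mathrm{Inf}_*(\mathcal{H}')$ and $\mathrm{Inf}_*(\mathcal{A})\longrightarrow \mathrm{Inf}_*(\mathcal{A}')$, since $\mathrm{Inf}_n$ is defined as $G(-)_n\cap \partial_n^{-1} G(-)_{n-1}$ and both conditions are preserved.

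Next, I would assemble these into a morphism of the short exact sequences that appear in the third row of diagram~(\ref{eq-2.1}):
\begin{eqnarray*}
\xymatrix{
0\ar[r] & \mathrm{Inf}_*(\mathcal{A})\ar[r]\ar[d]^{f|_{\mathcal{A}}} & \mathrm{Inf}_*(\mathcal{H})\ar[r]\ar[d]^{f} & \mathrm{Inf}_*(\mathcal{H})/\mathrm{Inf}_*(\mathcal{A})\ar[r]\ar[d]^{\bar f} & 0\\
0\ar[r] & \mathrm{Inf}_*(\mathcal{A}')\ar[r] & \mathrm{Inf}_*(\mathcal{H}')\ar[r] & \mathrm{Inf}_*(\mathcal{H}')/\mathrm{Inf}_*(\mathcal{A}')\ar[r] & 0.
}
\end{eqnarray*}
Commutativity of the left square is immediate from the compatibility of $f$ with the inclusion $\mathcal{A}\hookrightarrow \mathcal{H}$, and the right square commutes by the universal property of the quotient, which also defines $\bar f$. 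The vertical arrows are chain maps because $f$ is a chain map and the differentials on the quotients are induced from those on $\mathrm{Inf}_*(\mathcal{H})$, $\mathrm{Inf}_*(\mathcal{H}')$.

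Applying the zig-zag (snake) lemma to this morphism of short exact sequences gives the long exact sequence in the middle row of~(\ref{eq-2.5}) for both pairs together with the naturality square
\begin{eqnarray*}
\xymatrix{
H_n\bigl(\mathrm{Inf}_*(\mathcal{H})/\mathrm{Inf}_*(\mathcal{A})\bigr)\ar[r]^-{\bar f_*}\ar[d]^{\partial_*} & H_n\bigl(\mathrm{Inf}_*(\mathcal{H}')/\mathrm{Inf}_*(\mathcal{A}')\bigr)\ar[d]^{\partial'_*}\\
H_{n-1}(\mathrm{Inf}_*(\mathcal{A}))\ar[r]^-{(f|_{\mathcal{A}})_*} & H_{n-1}(\mathrm{Inf}_*(\mathcal{A}')).
}
\end{eqnarray*}
Under Definition~\ref{def-2.2} this is exactly the desired diagram, since $\bar f_*$ is by construction the functorial map $f_*: H_n(\mathcal{H},\mathcal{A})\longrightarrow H_n(\mathcal{H}',\mathcal{A}')$ of the preceding lemma.

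The only real subtlety is the very first step: verifying that $f$ descends to a chain map on the $\mathrm{Inf}$ complexes, because a morphism of hypergraphs can collapse vertices and thus send a nondegenerate simplex to a degenerate one. This is handled by adopting the usual convention (as in the cited references) that collapsed simplices are sent to zero, under which $f$ is a chain map on $C_*(\Delta(-))$ carrying $G(\mathcal{H})_*$ into $G(\mathcal{H}')_*$; once this is in place, everything else is formal naturality of the connecting map, so this degeneracy bookkeeping is the only potential obstacle.
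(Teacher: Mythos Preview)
Your proof is correct and follows essentially the same approach as the paper: both assemble $f$ into a morphism between the short exact sequences of infimum chain complexes (the third row of diagram~(\ref{eq-2.1})) and then invoke the naturality of the connecting homomorphism in the resulting long exact sequences. Your additional remark about the degeneracy convention for collapsed simplices is a useful clarification that the paper leaves implicit.
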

\begin{proof}
Let $f: (\mathcal{H},\mathcal{A})\longrightarrow (\mathcal{H}',\mathcal{A}')$  be a morphism of hypergraph pairs.  Then by the functorial property of the chain maps in  the third row in  the  diagram  (\ref{eq-2.1}), we have a commutative diagram of chain complexes
{\footnotesize\begin{eqnarray*}
\xymatrix{
0\ar[r] &\text{Inf}_*(\mathcal{A})\ar[r]  \ar[d]& \text{Inf}_*(\mathcal{H})\ar[r] \ar[d] & \text{Inf}_*(\mathcal{H})/ \text{Inf}_*(\mathcal{A})\ar[r] \ar[d] &0\\
0\ar[r] &\text{Inf}_*(\mathcal{A}')\ar[r]   & \text{Inf}_*(\mathcal{H}')\ar[r]  & \text{Inf}_*(\mathcal{H}')/ \text{Inf}_*(\mathcal{A}')\ar[r]  &0.  
}
\end{eqnarray*}}Here   in the last diagram,  all the arrows are chain maps, all the vertical chain maps are induced by $f$,  and both rows are short exact sequences.  By applying the homology functor  to  the  last  diagram,   we have a commutative diagram
{\footnotesize\begin{eqnarray*}
\xymatrix{
\cdots\ar[r] & H_n(\mathcal{A})\ar[d]\ar[r]^-{ i_* } & H_n( \mathcal{H}) \ar[d]\ar[r]^-{ j_* } & H_n(\mathcal{H},\mathcal{A})\ar[d]\ar[r]^-{ \partial_* } & H_{n-1}(\mathcal{A})\ar[d]\ar[r] &\cdots\\
\cdots\ar[r] & H_n(\mathcal{A}') \ar[r]^-{ i_* } & H_n( \mathcal{H}') \ar[r]^-{ j_* } & H_n(\mathcal{H}',\mathcal{A}') \ar[r]^-{ \partial_* } & H_{n-1}(\mathcal{A}') \ar[r] &\cdots 
}
\end{eqnarray*}}where both rows are long exact sequences and  all the vertical maps are homomorphisms  induced  by $f$.  We  finish  the proof of  Proposition~\ref{le-3.a}. 
\end{proof}

\subsection{Some long exact sequences of  the  homology}

\begin{lemma}\label{le-zoey}
Let $(\mathcal{H},\mathcal{A})$  be a hypergraph pair.  Then
 we have a commutative diagram  
{\footnotesize\begin{eqnarray*} 
\xymatrix{
\cdots\ar[r] & H_n(\Delta \mathcal{A})\ar[r]^-{(\Delta i)_*} & H_n(\Delta \mathcal{H}) \ar[r]^-{(\Delta j)_*}& H_n(\Delta\mathcal{H},\Delta\mathcal{A})\ar[r]^-{\partial_*}& H_{n-1}(\Delta\mathcal{A})\ar[r] &\cdots\\
\cdots\ar[r] & H_n(\mathcal{A})\ar[u]\ar[r]^-{i_*} & H_n( \mathcal{H}) \ar[u]\ar[r]^-{j_*} & H_n(\mathcal{H},\mathcal{A})\ar[u]\ar[r]^-{\partial_*} & H_{n-1}(\mathcal{A})\ar[u]\ar[r] &\cdots\\
\cdots\ar[r] & H_n(\delta\mathcal{A})\ar[u]\ar[r]^-{(\delta i)_*} & H_n( \delta\mathcal{H}) \ar[u]\ar[r]^-{(\delta j)_*} & H_n(\delta\mathcal{H},\delta\mathcal{A})\ar[u]\ar[r]^-{\partial_*} & H_{n-1}(\delta\mathcal{A})\ar[u]\ar[r] &\cdots
}
\end{eqnarray*}}where each row is a   long exact sequence of  homology groups.  Moreover,  all the maps  in Lemma~\ref{le-zoey} are functorial  with respect to morphisms  of hypergraph pairs.
\end{lemma}

\begin{proof}
Recall that we have two induced simplicial complex pairs
$(\Delta\mathcal{H},\Delta\mathcal{A})$  and  $(\delta\mathcal{H},\delta\mathcal{A})$.  
Consider the canonical inclusions 
\begin{eqnarray*}
i:   (\mathcal{A},\emptyset)\longrightarrow (\mathcal{H}, \emptyset),
~~~~~~
j:  (\mathcal{H},\emptyset)\longrightarrow (\mathcal{H},\mathcal{A})
\end{eqnarray*}
  of hypergraph pairs.  Then we have the induced canonical inclusions  
\begin{eqnarray*}
\Delta i:   (\Delta\mathcal{A},\emptyset)\longrightarrow (\Delta\mathcal{H}, \emptyset),~~~~~~
\Delta j:   (\Delta\mathcal{H},\emptyset)\longrightarrow (\Delta\mathcal{H}, \Delta\mathcal{A})
\end{eqnarray*}
of  simplicial complex pairs  for the associated simplicial complexes and the induced canonical inclusions  
\begin{eqnarray*}
\delta i:   (\delta\mathcal{A},\emptyset)\longrightarrow (\delta\mathcal{H}, \emptyset), ~~~~~~
\delta j:  (\delta\mathcal{H},\emptyset)\longrightarrow (\delta\mathcal{H}, \delta\mathcal{A})
\end{eqnarray*}
of  simplicial complex pairs for the  lower-associated simplicial complexes.   
By applying the homology functor to the commutative diagram (\ref{eq-2.1}),  we have the   commutative diagram  Lemma~\ref{le-zoey}   
where each row is a   long exact sequence of  homology groups.  Note that  in the   diagram Lemma~\ref{le-zoey},  the top row is the  usual   long exact sequence  of   the  relative homology groups of the  simplicial complex pair  $(\Delta\mathcal{H},\Delta\mathcal{A})$  and the  bottom row  is  the  usual   long exact sequence  of   the  relative homology groups of  the  simplicial complex pair  $(\delta\mathcal{H},\delta\mathcal{A})$.

Lemma~\ref{le-3.a} says that $\partial_*$ in the commutative diagram Lemma~\ref{le-zoey} is functorial with respect to morphisms  of hypergraph pairs.   In general,  it  can proved     analogously  that   all the maps  in Lemma~\ref{le-zoey} are functorial  with respect to morphisms  of hypergraph pairs.  
\end{proof}

\begin{corollary}
Let $(\mathcal{H},\mathcal{A})$  be a hypergraph pair.  Then for each $n\geq 0$ we have 
\begin{eqnarray}\label{eq-palr}
{\rm Ker}((\partial_*)_n) = {\rm Im}((j_*)_n),  ~~~~~~  {\rm Im}((\partial_*)_n)= {\rm Ker}((i_*)_{n-1}).  
\end{eqnarray}
In addition,  if we take the abelian group $G$ as the rational  numbers $\mathbb{Q}$,  then 
\begin{eqnarray}\label{eq-kwoo}
H_n(\mathcal{H},\mathcal{A})\cong {\rm Ker}((\partial_*)_n) \oplus {\rm Im}((\partial_*)_n).  
\end{eqnarray}
\end{corollary}

\begin{proof}
The first assertion (\ref{eq-palr})  follows from the long exact sequence in the middle row of Lemma~\ref{le-zoey}. 
In addition, we take the abelian group $G$ as the rational  numbers $\mathbb{Q}$.  By the isomorphism theorem of  vector spaces, we  have the second assertion (\ref{eq-kwoo}). 
\end{proof}

\begin{definition}
A {\it hypergraph triple} $(\mathcal{H}, \mathcal{A}, \mathcal{B})$ is a triple of hypergraphs $\mathcal{H}$, $\mathcal{A}$,  and $\mathcal{B}$ such that  $\mathcal{B}\subseteq \mathcal{A}\subseteq \mathcal{H}$,  that is,   $\mathcal{A}$  is a sub-hypergraph of $\mathcal{H}$   and  $\mathcal{B}$ is a sub-hypergraph of $\mathcal{A}$. 
\end{definition}

\begin{lemma}\label{eq-2.55}
  Given a hypergraph triple $(\mathcal{H}, \mathcal{A}, \mathcal{B})$,  we  have  a commutative diagram
 {\footnotesize \begin{eqnarray*} 
\xymatrix{
\cdots\ar[r] & H_n(\Delta \mathcal{A},\Delta\mathcal{B})\ar[r]^-{ \Delta i_* } & H_n(\Delta \mathcal{H},\Delta\mathcal{B}) \ar[r]^-{ \Delta j_* }& H_n(\Delta\mathcal{H},\Delta\mathcal{A})\ar[r]^-{ \partial _* }& H_{n-1}(\Delta\mathcal{A},\Delta\mathcal{B})\ar[r] &\cdots\\
\cdots\ar[r] & H_n(\mathcal{A},\mathcal{B})\ar[u]\ar[r]^-{ i_* } & H_n( \mathcal{H},\mathcal{B}) \ar[u]\ar[r]^-{ j_* } & H_n(\mathcal{H},\mathcal{A})\ar[u]\ar[r]^-{ \partial_* } & H_{n-1}(\mathcal{A},\mathcal{B})\ar[u]\ar[r] &\cdots\\
\cdots\ar[r] & H_n(\delta\mathcal{A},\delta\mathcal{B})\ar[u]\ar[r]^-{ \delta i_* } & H_n( \delta\mathcal{H},\mathcal{B}) \ar[u]\ar[r]^-{ \delta j_* } & H_n(\delta\mathcal{H},\delta\mathcal{A})\ar[u]\ar[r]^-{ \partial _* } & H_{n-1}(\delta\mathcal{A},\mathcal{B})\ar[u]\ar[r] &\cdots}
\end{eqnarray*}}where each row is a long exact sequence of relative homology groups.  
\end{lemma}
\begin{proof}
 The proof is   an  analogue of  Lemma~\ref{le-zoey}   and   \cite[p. 118]{hatcher}.  We  omit the details.   
 \end{proof}
 It follows from  the  long  exact  sequence  in  the middle row of Lemma~\ref{eq-2.55} that the rank of the  relative embedded homology  group is sub-additive:  
 \begin{corollary}\label{co-3.66}
For any hypergraph triple $(\mathcal{H}, \mathcal{A}, \mathcal{B})$ and any $n\geq 0$, we have 
\begin{eqnarray*}
{\rm rank} H_n(\mathcal{H}, \mathcal{B}) \leq {\rm rank} H_n(\mathcal{H}, \mathcal{A})  + {\rm rank} H_n(\mathcal{A}, \mathcal{B}).  
\end{eqnarray*}
\end{corollary}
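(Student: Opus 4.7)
The plan is to read off the inequality directly from the long exact sequence in the middle row of (\ref{eq-2.55}). Concretely, I would isolate the three-term segment
\begin{eqnarray*}
H_n(\mathcal{A},\mathcal{B}) \xrightarrow{(i_*)_n} H_n(\mathcal{H},\mathcal{B}) \xrightarrow{(j_*)_n} H_n(\mathcal{H},\mathcal{A})
\end{eqnarray*}
and use exactness at the middle term to get $\mathrm{Ker}((j_*)_n) = \mathrm{Im}((i_*)_n)$. This yields a short exact sequence
\begin{eqnarray*}
0 \longrightarrow \mathrm{Im}((i_*)_n) \longrightarrow H_n(\mathcal{H},\mathcal{B}) \longrightarrow \mathrm{Im}((j_*)_n) \longrightarrow 0,
\end{eqnarray*}
which is the essential input for the rank computation.

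Next I would invoke the fact that rank is additive over short exact sequences of abelian groups, giving
\begin{eqnarray*}
\mathrm{rank}\, H_n(\mathcal{H},\mathcal{B}) = \mathrm{rank}\, \mathrm{Im}((i_*)_n) + \mathrm{rank}\, \mathrm{Im}((j_*)_n).
\end{eqnarray*}
Since $\mathrm{Im}((i_*)_n)$ is a quotient of $H_n(\mathcal{A},\mathcal{B})$ and $\mathrm{Im}((j_*)_n)$ is a subgroup of $H_n(\mathcal{H},\mathcal{A})$, both ranks are bounded above by $\mathrm{rank}\, H_n(\mathcal{A},\mathcal{B})$ and $\mathrm{rank}\, H_n(\mathcal{H},\mathcal{A})$ respectively, whence the claimed inequality.

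The only step that requires any care is justifying the long exact sequence (\ref{eq-2.55}) for the triple $(\mathcal{H},\mathcal{A},\mathcal{B})$ itself, but the paper already asserts this (saying the proof is analogous to (\ref{eq-2.5})). Once that is granted, the corollary is a formal consequence, so there is no substantive obstacle: the argument is essentially the same rank count one uses for ordinary singular homology of topological triples. I would therefore keep the write-up to just a few lines, citing exactness at $H_n(\mathcal{H},\mathcal{B})$ and the additivity of rank.
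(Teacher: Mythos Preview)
Your proposal is correct and is precisely the standard rank-subadditivity argument from a long exact sequence; the paper's own proof consists only of the one-line citation ``The proof is similar with \cite[p.~337]{ana1}'', so you have simply spelled out what the paper delegates to that reference.
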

\begin{proof}
The proof is an analogue of  \cite[p. 337]{ana1}.
\end{proof}

\section{Cell structures from the embedded homology}\label{s5x}

Let  $\mathcal{H}$  be a hypergraph. 

\begin{definition}
For each $n\geq 0$,  we define the {\it $n$-skeleton} $\mathcal{H}^n$  of $\mathcal{H}$ to  be the sub-hypergraph consisting of all the hyperedges of dimensions  at most $n$.  
\end{definition}

Applying   the  diagram   in   Lemma~\ref{eq-2.55}  to the   hypergraph  triple $(\mathcal{H}^n, \mathcal{H}^{n-1}, \mathcal{H}^{n-2})$  and  with  the help of  the argument in \cite[p. 139]{hatcher},  we obtain  a commutative diagram 
{\footnotesize\begin{eqnarray}\label{eq-pfkq}
\xymatrix{
\cdots \ar[r]^-{\partial_*} 
&H_{n+1}(\Delta(\mathcal{H}^{n+1}),\Delta(\mathcal{H}^n))\ar[r]^-{\partial_*} &H_n(\Delta(\mathcal{H}^n),\Delta(\mathcal{H}^{n-1}))\ar[r]^-{\partial_*} &\\
\cdots \ar[r]^-{\partial_*} 
&H_{n+1}(\mathcal{H}^{n+1},\mathcal{H}^n)\ar[r]^-{\partial_*}\ar[u] &H_n(\mathcal{H}^n,\mathcal{H}^{n-1})\ar[r]^-{\partial_*}\ar[u] &\\
\cdots \ar[r]^-{\partial_*} 
&H_{n+1}(\delta(\mathcal{H}^{n+1}),\delta(\mathcal{H}^n))\ar[r]^-{\partial_*} \ar[u]
&H_n(\delta(\mathcal{H}^n),\delta(\mathcal{H}^{n-1}))\ar[r]^-{\partial_*} 
\ar[u] &\\
\ar[r]&H_{n-1}(\Delta(\mathcal{H}^{n-1}),\Delta(\mathcal{H}^{n-2}))\ar[r]^-{\partial_*} &\cdots &\\
\ar[r]&H_{n-1}(\mathcal{H}^{n-1},\mathcal{H}^{n-2})\ar[r]^-{\partial_*}\ar[u]
 &\cdots &\\
\ar[r] &H_{n-1}(\delta(\mathcal{H}^{n-1}),\delta(\mathcal{H}^{n-2}))\ar[r]^-{\partial_*} \ar[u]
&\cdots &
}
\end{eqnarray}}where each row is a chain complex.  For simplicity,  we write the three rows on the  last  commutative diagram as  
 three chain complexes
\begin{eqnarray}\label{eq-3.88}
&\{H_*(\delta(\mathcal{H}^*),\delta(\mathcal{H}^{*-1})), \partial _*\}
\longrightarrow \{H_*(\mathcal{H}^*, \mathcal{H}^{*-1}), \partial_*\} \nonumber\\
& \longrightarrow 
\{H_*(\Delta(\mathcal{H}^*), \Delta(\mathcal{H}^{*-1})), \partial _*\}. 
\end{eqnarray}
Here  in (\ref{eq-3.88}),  the two arrows are the chain maps given  as  the vertical maps  in the  diagram  (\ref{eq-pfkq}).  
\begin{definition}
We interpret (\ref{eq-3.88}) as the {\it cell structure} of the hypergraph $\mathcal{H}$.  
\end{definition}
Note that for a general hypergraph $\mathcal{H}$,  the first chain complex in (\ref{eq-3.88}) could  be different from the chain complex  $C_*(\Delta\mathcal{H})$ of $\Delta\mathcal{H}$, and 
the last chain complex in (\ref{eq-3.88})  could  be different from the chain complex  $C_*(\delta\mathcal{H})$  of $\delta\mathcal{H}$.  
However,   when $\mathcal{H}$ is a simplicial complex, 
all the three chain complexes in (\ref{eq-3.88}) are the same, which is  the usual chain complex of the cellular homology $\mathcal{H}$.

\begin{lemma}\label{le-cell}
For any hypergraph $\mathcal{H}$ and any $i,n\geq 0$,  we have
\begin{enumerate}[(i).]
\item
  $H_i(\mathcal{H}^n,\mathcal{H}^{n-1})=0$ if  $i\neq n$;
  \item
     $H_i(\mathcal{H}^n,\mathcal{H}^{n-1})={\rm Inf}_n(\mathcal{H})$ if $i=n$.
     \end{enumerate}   
\end{lemma}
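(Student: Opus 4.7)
The plan is to compute the quotient chain complex $\mathrm{Inf}_*(\mathcal{H}^n)/\mathrm{Inf}_*(\mathcal{H}^{n-1})$ directly from the definitions, since by Definition~\ref{def-2.2} we have $H_i(\mathcal{H}^n,\mathcal{H}^{n-1})=H_i(\mathrm{Inf}_*(\mathcal{H}^n)/\mathrm{Inf}_*(\mathcal{H}^{n-1}))$. The key observation is that the $n$-skeleton $\mathcal{H}^n$ agrees with $\mathcal{H}$ through dimension $n$ and is empty above, so the associated Inf-complexes are forced to coincide with $\mathrm{Inf}_*(\mathcal{H})$ in a narrow dimensional range, making the quotient concentrated in a single degree.

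First I would record that for any $k$,
\begin{equation*}
G(\mathcal{H}^n)_k = \begin{cases} G(\mathcal{H})_k & \text{if } k\leq n, \\ 0 & \text{if } k>n, \end{cases}
\end{equation*}
since the $k$-hyperedges of $\mathcal{H}^n$ are exactly the $k$-hyperedges of $\mathcal{H}$ when $k\leq n$ and none otherwise. Substituting into $\mathrm{Inf}_k(\mathcal{H}^n)=G(\mathcal{H}^n)_k\cap \partial_k^{-1}G(\mathcal{H}^n)_{k-1}$, this vanishes for $k>n$, and for $k\leq n$ both $G(\mathcal{H}^n)_k$ and $G(\mathcal{H}^n)_{k-1}$ coincide with their $\mathcal{H}$-counterparts, so $\mathrm{Inf}_k(\mathcal{H}^n)=\mathrm{Inf}_k(\mathcal{H})$. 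The same reasoning applied to $\mathcal{H}^{n-1}$ yields $\mathrm{Inf}_k(\mathcal{H}^{n-1})=\mathrm{Inf}_k(\mathcal{H})$ for $k\leq n-1$ and $\mathrm{Inf}_k(\mathcal{H}^{n-1})=0$ for $k\geq n$.

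Taking the quotient degree-by-degree, $\mathrm{Inf}_k(\mathcal{H}^n)/\mathrm{Inf}_k(\mathcal{H}^{n-1})$ is $0$ for $k<n$ (equal groups), is $\mathrm{Inf}_n(\mathcal{H})$ for $k=n$, and is $0$ for $k>n$ (both zero). Hence the quotient chain complex is concentrated in degree $n$, where it equals $\mathrm{Inf}_n(\mathcal{H})$; all boundary maps are forced to be zero because either their source or target is zero. A chain complex concentrated in a single degree has homology equal to that group in that degree and zero elsewhere, giving exactly the claimed identification.

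Since the argument is essentially unpacking definitions, I do not expect a real obstacle; the only points demanding attention are the convention $\mathcal{H}^{-1}=\emptyset$ so that $\mathrm{Inf}_*(\mathcal{H}^{-1})=0$ (under which the $n=0$ case reduces to $H_0(\mathcal{H}^0)=\mathrm{Inf}_0(\mathcal{H})=G(\mathcal{H})_0$, consistent with the statement), and the care needed to check $\mathrm{Inf}_n(\mathcal{H}^n)=\mathrm{Inf}_n(\mathcal{H})$ at the top dimension, which uses that the codomain $G(\mathcal{H}^n)_{n-1}$ equals $G(\mathcal{H})_{n-1}$ rather than being truncated.
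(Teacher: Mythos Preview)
Your proof is correct and follows essentially the same approach as the paper: both compute $\mathrm{Inf}_k(\mathcal{H}^n)$ directly to find it equals $\mathrm{Inf}_k(\mathcal{H})$ for $k\leq n$ and $0$ for $k>n$, conclude that the quotient chain complex is concentrated in degree $n$ with value $\mathrm{Inf}_n(\mathcal{H})$, and read off the homology. Your version is slightly more explicit in unpacking the definition of $\mathrm{Inf}_k$ via $G(\mathcal{H}^n)_k$ and in checking the boundary cases, but the argument is the same.
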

\begin{proof}
Let $\mathcal{H}$ be a hypergraph and let $i,n\geq 0$.  We observe that
\begin{eqnarray*}
{\rm Inf}_i(\mathcal{H}^n)=\left\{
\begin{array}{ll}
{\rm Inf}_i(\mathcal{H}), & {\rm ~if~} i\leq n,\\
0, &{\rm ~if~}  i\geq n+1. 
\end{array}
\right. 
\end{eqnarray*}
Hence
\begin{eqnarray}\label{eq-2.chain}
{\rm Inf}_i(\mathcal{H}^n)/{\rm Inf}_i(\mathcal{H}^{n-1})=\left\{
\begin{array}{ll}
0, & {\rm ~if~} i\leq n-1 {\rm ~or~} i\geq n+1,\\
{\rm Inf}_n(\mathcal{H}), &{\rm ~if~}  i=n. 
\end{array}
\right. 
\end{eqnarray}
Therefore,  taking the homology of the quotient infimum chain complex (\ref{eq-2.chain}),  we have
\begin{eqnarray*}
H_i(\mathcal{H}^n,\mathcal{H}^{n-1})=\left\{
\begin{array}{ll}
0, & {\rm ~if~} i\neq n,\\
{\rm Inf}_n(\mathcal{H}), & {\rm ~if~} i= n. 
\end{array}
\right. 
\end{eqnarray*}
We  obtain  (i)  and  (ii).  
\end{proof}

For a hypergraph $\mathcal{H}$,  we  use  $\mathcal{D}(\mathcal{H})$  to  denote  the middle chain complex of (\ref{eq-3.88}) i.e. the middle row  of  the diagram (\ref{eq-pfkq}).  By Lemma~\ref{le-cell},   
 the  next  proposition follows.

\begin{proposition}\label{pr-3.3.1}
Let $\mathcal{H}$ be a hypergraph.   Then there is an isomorphism
\begin{eqnarray*}
  H_*(\mathcal{D}(\mathcal{H}))\overset{\cong}{\longrightarrow} H_*(\mathcal{H})
\end{eqnarray*}
which is natural with respect to morphisms of hypergraphs. 
\end{proposition}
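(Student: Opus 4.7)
The plan is to adapt the classical cellular homology argument (as in Munkres Theorem~39.4 or Hatcher Theorem~2.35) to the embedded-homology setting for hypergraphs, exploiting Lemma~\ref{le-cell} as the analogue of the fact that cellular chain groups pick out exactly the generators in each dimension.

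First I would establish the vanishing lemmas that make the argument run. Because $\mathcal{H}^m$ has no hyperedges of dimension exceeding $m$, we have $\mathrm{Inf}_i(\mathcal{H}^m)=0$ for $i>m$, and hence $H_i(\mathcal{H}^m)=0$ for $i>m$. Next I would chase the long exact sequence in the middle row of (\ref{eq-2.55}) applied to the pair $(\mathcal{H}^{n+1},\mathcal{H}^n)$: combining Lemma~\ref{le-cell} with the vanishing of $H_{n+1}(\mathcal{H}^n)$, the inclusion $\mathcal{H}^n\hookrightarrow\mathcal{H}^{n+1}$ induces an injection on $H_n$ and an isomorphism on $H_i$ for $i<n$. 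Iterating, the inclusion $\mathcal{H}^n\hookrightarrow \mathcal{H}^m$ induces an isomorphism on $H_i$ for $i<n$ and $m\ge n$, and hence (since every cycle or boundary in $\mathrm{Inf}_*(\mathcal{H})$ only involves finitely many hyperedges and therefore sits inside some $\mathcal{H}^m$) an isomorphism $H_i(\mathcal{H}^n)\cong H_i(\mathcal{H})$ for $i<n$.

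Second, I would carry out the standard diagram chase. Writing $j_n:H_n(\mathcal{H}^n)\to H_n(\mathcal{H}^n,\mathcal{H}^{n-1})=\mathcal{D}_n(\mathcal{H})$ and $\partial_{n+1}:\mathcal{D}_{n+1}(\mathcal{H})\to H_n(\mathcal{H}^n)$ for the maps from the long exact sequence of the appropriate pairs, the boundary in $\mathcal{D}(\mathcal{H})$ factors as $d_n=j_{n-1}\circ\partial_n$, which is exactly the definition in (\ref{eq-3.88}). The vanishing of $H_n(\mathcal{H}^{n-1})$ makes $j_n$ injective and identifies $H_n(\mathcal{H}^n)\cong\ker d_n$; meanwhile $\partial_{n+1}$ is surjective onto $\ker\bigl(H_n(\mathcal{H}^n)\to H_n(\mathcal{H}^{n+1})\bigr)$, so $\mathrm{im}\,d_{n+1}=j_n\bigl(\ker(H_n(\mathcal{H}^n)\to H_n(\mathcal{H}^{n+1}))\bigr)$. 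Dividing, $j_n$ descends to an isomorphism
\begin{eqnarray*}
\lambda_n: H_n(\mathcal{D}(\mathcal{H}))=\ker d_n/\mathrm{im}\,d_{n+1}\xrightarrow{\ \cong\ } H_n(\mathcal{H}^{n+1})\cong H_n(\mathcal{H}),
\end{eqnarray*}
where the last isomorphism was established in the previous paragraph.

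Finally, naturality with respect to a morphism $f:\mathcal{H}\to\mathcal{H}'$ follows because $f$ restricts to morphisms of skeleta $f^m:\mathcal{H}^m\to(\mathcal{H}')^m$ (dimension of hyperedges cannot increase under $f$), so the functoriality of relative embedded homology already noted after Lemma~\ref{le-3.a} and the naturality of the connecting homomorphism from the same lemma show that every map in the above diagram chase commutes with $f_*$, whence $\lambda$ is natural. The main obstacle I expect is the bookkeeping in the diagram chase verifying that $\ker d_n=j_n(H_n(\mathcal{H}^n))$ and $\mathrm{im}\,d_{n+1}=j_n(\ker(H_n(\mathcal{H}^n)\to H_n(\mathcal{H}^{n+1})))$ using only the exactness of (\ref{eq-2.55}); everything else is assembly of the vanishing lemmas with the pieces already in place.
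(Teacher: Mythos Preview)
Your proposal is correct and follows exactly the approach the paper indicates: the paper's proof simply says to verify that the three steps of Munkres~\cite[Theorem~39.4]{eat} go through for embedded homology of hypergraphs, and you have carried out precisely those steps (the vanishing lemmas, the standard diagram chase with $j_n$ and $\partial_n$, and naturality via the skeleton-preserving property of morphisms). If anything, you have supplied more detail than the paper itself.
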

\begin{proof}
The isomorphism $\lambda$ as well as its functorial property follows from Lemma~\ref{le-cell}.   Alternatively,  
we  substitute  the  simplicial complex  in the proof of \cite[Theorem~39.4]{eat} by a hypergraph  and substitute the simplicial homology  in the proof of \cite[Theorem~39.4]{eat} by the embedded homology of hypergraphs.  
We verify that all the three steps in the proof of \cite[Theorem~39.4]{eat} still hold for the embedded homology of hypergraphs.  In  this way  we will  also  prove Proposition~\ref{pr-3.3.1}.  
\end{proof}

\section{The  homology of  hypergraphs and  associated  complexes}\label{s6x}

We consider  a special case of the relative embedded homology groups. 

\begin{theorem}[Main Result I]\label{pr-xqy}
Let $\mathcal{H}$  by a hypergraph.  Then we  have a commutative diagram 
 {\footnotesize \begin{eqnarray*} 
\xymatrix{
& &\cdots \ar[d] &&&\cdots\ar[ld]\\
& &H_{n+1}(\Delta\mathcal{H},\delta\mathcal{H}) \ar[d]^-{(\partial_{n+1}'')_*} &&H_{n+1}(\Delta\mathcal{H},\mathcal{H})\ar[ld]_-{(\partial'_{n+1})_*}&\\
\cdots \ar[r] &H_{n+1}(\mathcal{H},\delta\mathcal{H})\ar[r]^-{(\partial_{n+1})_*}  &H_n(\delta\mathcal{H})\ar[r]^-{i_*} \ar[d]^-{i''_*} & H_n(\mathcal{H})\ar[r]^-{j_*}\ar[ld]_-{i'_*} & H_{n}(\mathcal{H},\delta\mathcal{H})\ar[r]^-{(\partial_n)_*} &\cdots\\
&&H_n(\Delta\mathcal{H})\ar[d]^-{j''_*}\ar[ld]_-{j'_*}&&&\\
&H_{n}(\Delta\mathcal{H},\mathcal{H})\ar[ld]_-{(\partial'_n)_*}&H_n(\Delta\mathcal{H},\delta\mathcal{H})\ar[d]^-{(\partial''_n)_*}&&&\\
\cdots&&\cdots&&&
}
\end{eqnarray*}}where  
\begin{enumerate}[(i).]
\item 
all 
the vertical maps give a long exact sequence,
\item
all the horizontal maps give a long exact sequence,
\item
all the skew maps (the arrows from  right-up to  left-down) give a long exact sequence. 
\end{enumerate} 
\end{theorem}
\begin{proof}
  Given  a hypergraph $\mathcal{H}$,  we have the following commutative diagram of injective morphisms of  hypergraph pairs
{\footnotesize\begin{eqnarray*}
\xymatrix{
(\delta\mathcal{H},\emptyset)\ar[d]  & &\\
(\mathcal{H},\emptyset)\ar[d] \ar[r]  &(\mathcal{H},\delta\mathcal{H})\ar[d] &\\
(\Delta\mathcal{H},\emptyset)\ar[r]  &(\Delta\mathcal{H},\delta\mathcal{H}) \ar[r] & (\Delta\mathcal{H},\mathcal{H}). 
}
\end{eqnarray*}}By  the second row of the diagram Lemma~\ref{le-zoey}  in Lemma~\ref{le-zoey},  the  last diagram  induces a commutative diagram of (relative) embedded homology groups 
{\footnotesize \begin{eqnarray*}
\xymatrix{
H_*(\delta\mathcal{H})\ar[d]_-{i_*}\ar@/_2pc/[dd]_-{i''_*}  && &&\\
H_*(\mathcal{H})\ar[d]_-{i'_*} \ar[rr]^-{j_*}  &&H_*(\mathcal{H},\delta\mathcal{H})\ar[llu]_-{\partial_*}\ar[d] &&\\
H_*(\Delta\mathcal{H})\ar[rr]^-{j''_*}\ar@/_2pc/[rrrr]^-{j'_*}  &&H_*(\Delta\mathcal{H},\delta\mathcal{H})\ar@/^1pc/[lluu]^-{\partial''_*} \ar[rr] && H_*(\Delta\mathcal{H},\mathcal{H})\ar@/_3pc/[llllu]_-{\partial'_*}. 
}
\end{eqnarray*}}Here each of the triple  $(i_*, j_*, \partial_*)$,  $(i'_*, j'_*, \partial'_*)$,  and  $(i''_*, j''_*, \partial''_*)$  gives  a long exact  sequence of  (relative) embedded homology groups.  Specifically, these three long exact sequences can be expressed in the commutative diagram  Theorem~\ref{pr-xqy}.  We  finish the proof. 
\end{proof}

The next corollary follows from Theorem~\ref{pr-xqy}.  

\begin{corollary}\label{th1}
Let $m\geq l+1$.  Suppose $H_n(\mathcal{H})=0$  for any $l\leq n\leq m$.  Then for any $l+1\leq n\leq m$,  we have a short exact sequence
{\footnotesize\begin{eqnarray*}
\xymatrix{
0\ar[r] & H_n(\Delta\mathcal{H})\ar[d]^{\cong} \ar[r] &\tilde H_n(\Delta\mathcal{H}/\delta\mathcal{H})\ar[r] &H_{n-1} (\delta\mathcal{H})\ar[r] \ar[d] ^{\cong}  &0  \\
&H_n(\Delta\mathcal{H},\mathcal{H})&&H_n(\mathcal{H},\delta\mathcal{H}). &
}
\end{eqnarray*} }
\end{corollary}
\begin{remark}
Here we abuse the notation $\Delta\mathcal{H}/\delta\mathcal{H}$  for the quotient topological space $|\Delta\mathcal{H}|/|\delta\mathcal{H}|$ where the ambient space $|\Delta\mathcal{H}|$  is the geometric realization of $\Delta\mathcal{H}$  and the  sub-space $|\delta\mathcal{H}|$   is  the geometric realization of $\delta\mathcal{H}$.  
\end{remark}
\begin{proof}[Proof of Corollary~\ref{th1}]
By the  commutative diagram Theorem~\ref{pr-xqy} of three long exact sequences of (relative) embedded  homology groups, we have
\begin{enumerate}[(i).]
\item
$H_n(\Delta\mathcal{H},\mathcal{H})\cong H_n(\Delta\mathcal{H})$ for all $l+1\leq n\leq m$;
\item
$H_n(\mathcal{H},\delta\mathcal{H})\cong H_{n-1}(\delta\mathcal{H})$ for all $l+1\leq n\leq m$;
\item
$i''_*: H_n(\delta\mathcal{H})\longrightarrow H_n(\Delta\mathcal{H})$ is a zero-map for all $l\leq n\leq m$, which implies a short exact sequence
\begin{eqnarray*}
\xymatrix{
0\ar[r] & H_n(\Delta\mathcal{H})\ar[r]^{j''_*} & H_n(\Delta\mathcal{H},\delta\mathcal{H})\ar[r]^{\partial''_*} & H_{n-1}(\delta\mathcal{H}) \ar[r] &0
}
\end{eqnarray*}
for all $l+1\leq m\leq n$. 
\end{enumerate}
Since $H_n(\Delta\mathcal{H},\delta\mathcal{H})$  is the usual relative homology  group  of simplicial complex pairs,  it is isomorphic to the reduced homology $\tilde H_n(\Delta\mathcal{H}/\delta\mathcal{H})$ of the quotient space $\Delta\mathcal{H}/\delta\mathcal{H}$.     Thus summarizing (i), (ii)  and (iii),  we obtain the corollary. 
\end{proof}

Supplementary to Theorem~\ref{pr-xqy}  and Corollary~\ref{th1}, we apply the middle row in diagram Lemma~\ref{eq-2.55} to the triple $(\Delta\mathcal{H},\mathcal{H},\delta\mathcal{H})$.  We obtain a long exact sequence  of relative embedded homology groups 
\begin{eqnarray*} 
 \cdots\longrightarrow    H_n(\mathcal{H},\delta\mathcal{H}) \overset{{(i_*)_n}}{\longrightarrow}   H_n( \Delta\mathcal{H},\delta\mathcal{H})  \overset{(j_*)_n}{\longrightarrow }   
 H_n(\Delta\mathcal{H},\mathcal{H}) \overset{(\partial_*)_n}{\longrightarrow }   H_{n-1}(\mathcal{H},\delta\mathcal{H}) \longrightarrow \cdots
\end{eqnarray*}
It follows with the help of  Corollary~\ref{co-3.66} that
\begin{eqnarray*}
{\rm rank} H_n(\Delta\mathcal{H}, \delta\mathcal{H}) \leq {\rm rank} H_n(\Delta\mathcal{H}, \mathcal{H})  + {\rm rank} H_n(\mathcal{H}, \delta\mathcal{H}).  
\end{eqnarray*}

\section{The  Mayer-Vietoris sequence for hypergraph pairs}\label{s7x}

The relative  version  of  the Mayer-Vietoris sequence for  the  homology  of   simplicial complex pairs can be found in \cite[Section 4.6]{spa}.   On  the  other hand,  the Mayer-Vietoris sequence  for  the embedded homology of hypergraphs  is given in \cite[Section 3.3]{hg1}.   In  this  section,  we will prove a relative version of  the Mayer-Vietoris sequence  for  the embedded homology   of hypergraph pairs.

Let $(\mathcal{H},\mathcal{A})$  and  $(\mathcal{H}',\mathcal{A}')$  be two hypergraph pairs.  We  consider the following three  conditions:

\begin{enumerate}[(I).]
\item
for any $\sigma\in\mathcal{H}$ and any $\sigma'\in \mathcal{H}'$, either $\sigma\cap\sigma'$  is  the  empty-set  or  $\sigma\cap\sigma'\in \mathcal{H}\cap\mathcal{H}'$;  
\item
 for any $\tau\in\mathcal{A}$ and any $\tau'\in \mathcal{A}'$, either $\tau\cap\tau'$  is the empty-set  or  $\tau\cap\tau'\in \mathcal{A}\cap\mathcal{A}'$;
 \item
  both  $\mathcal{H}\cap\mathcal{H}'$ and $\mathcal{A}\cap\mathcal{A}'$ are  disjoint unions of standard simplicial complexes
\begin{eqnarray*}
\mathcal{H}\cap\mathcal{H}'=\sqcup_{i=1}^k \Delta[n_i],~~~~~~
\mathcal{A}\cap\mathcal{A}'=\sqcup_{i=1}^k \Delta[m_i]
\end{eqnarray*}
where for each $1\leq i\leq k$,  it holds that $m_i\leq n_i$ and $\Delta[m_i]$  is a simplicial sub-complex  of $\Delta[n_i]$.  
 \end{enumerate}

\begin{theorem}[Main Result II]\label{th-3.888}
Let $(\mathcal{H},\mathcal{A})$  and  $(\mathcal{H}',\mathcal{A}')$  be two hypergraph pairs such that    both (I)  and  (II)  are satisfied.  
 Then we have a long exact sequence of the  relative embedded homology groups 
\begin{eqnarray}\label{eq-zxva}
&\cdots\longrightarrow H_n(\mathcal{H}\cap\mathcal{H}', \mathcal{A}\cap\mathcal{A}')\longrightarrow H_n(\mathcal{H},\mathcal{A})\oplus H_n(\mathcal{H}',\mathcal{A}')\longrightarrow \nonumber \\
&H_n(\mathcal{H}\cup\mathcal{H}',\mathcal{A}\cup\mathcal{A}')\longrightarrow H_{n-1}(\mathcal{H}\cap\mathcal{H}', \mathcal{A}\cap\mathcal{A}')\longrightarrow \cdots
\end{eqnarray} 
\end{theorem}

\begin{proof}
Let  $(\mathcal{H},\mathcal{A})$  and  $(\mathcal{H}',\mathcal{A}')$  be two  hypergraph pairs  such  that  both (I)  and  (II)  are  satisfied.    Then with the help of  \cite[Proposition~3.9]{hg1}, we have a commutative diagram
{\footnotesize\begin{eqnarray*}
\xymatrix{
0\ar[r] &{\rm Inf}_*(\mathcal{A}\cap\mathcal{A}')\ar[r]\ar[d] &{\rm Inf}_*(\mathcal{A})\oplus {\rm Inf}_*(\mathcal{A}')\ar[r]\ar[d]  & {\rm Inf}_*(\mathcal{A}\cup \mathcal{A}')\ar[r]\ar[d] &0\\
0\ar[r] &{\rm Inf}_*(\mathcal{H}\cap\mathcal{H}')\ar[r] &{\rm Inf}_*(\mathcal{H})\oplus {\rm Inf}_*(\mathcal{H}')\ar[r]  & {\rm Inf}_*(\mathcal{H}\cup \mathcal{H}')\ar[r] &0
}
\end{eqnarray*}}of chain complexes  
where both rows are short exact sequences and all the vertical maps are canonical inclusions.    
By taking the quotient chain complex  with respect to each vertical map in the last diagram,  we  have a short exact sequence 
\begin{eqnarray}\label{eq-xyza}
&0\longrightarrow {\rm Inf}_*(\mathcal{H}\cap\mathcal{H}')/{\rm Inf}_*(\mathcal{A}\cap\mathcal{A}')\longrightarrow {\rm Inf}_*(\mathcal{H})/{\rm Inf}_*(\mathcal{A})\oplus {\rm Inf}_*(\mathcal{H}')/{\rm Inf}_*(\mathcal{A}')\nonumber\\
&\longrightarrow {\rm Inf}_*(\mathcal{H}\cup \mathcal{H}')/{\rm Inf}_*(\mathcal{A}\cup \mathcal{A}')\longrightarrow 0 
\end{eqnarray}
of  chain  complexes.  
Applying the homology functor to the short exact sequence (\ref{eq-xyza})   of  chain  complexes,  we  obtain  the long exact sequence  (\ref{eq-zxva}) of  the relative embedded homology groups.  
\end{proof}

Theorem~\ref{th-3.888} yields  the next corollary.   

\begin{corollary}\label{co-qmboz}
Let $\mathcal{H}$ and $\mathcal{H}'$ be two hypergraphs such that  (I)  is satisfied.  Then we have two long exact sequences of the  relative homology groups 
\begin{eqnarray*}
&\cdots\longrightarrow H_n(\mathcal{H}\cap\mathcal{H}', \delta\mathcal{H}\cap\delta\mathcal{H}')\longrightarrow H_n(\mathcal{H},\delta\mathcal{H})\oplus H_n(\mathcal{H}',\delta\mathcal{H}')\\
&\longrightarrow H_n(\mathcal{H}\cup\mathcal{H}',\delta\mathcal{H}\cup\delta\mathcal{H}')\longrightarrow H_{n-1}(\mathcal{H}\cap\mathcal{H}', \delta\mathcal{H}\cap\delta\mathcal{H}')\longrightarrow \cdots
\end{eqnarray*}
and
\begin{eqnarray*}
&\cdots\longrightarrow H_n(\Delta\mathcal{H}\cap\Delta\mathcal{H}', \mathcal{H}\cap\mathcal{H}')\longrightarrow H_n(\Delta\mathcal{H},\mathcal{H})\oplus H_n(\Delta\mathcal{H}',\mathcal{H}')\\
&\longrightarrow H_n(\Delta\mathcal{H}\cup\Delta\mathcal{H}',\mathcal{H}\cup\mathcal{H}')\longrightarrow H_{n-1}(\Delta\mathcal{H}\cap\Delta\mathcal{H}', \mathcal{H}\cap\mathcal{H}')\longrightarrow \cdots
\end{eqnarray*}
\end{corollary}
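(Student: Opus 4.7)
The plan is to deduce both long exact sequences as direct applications of Theorem~\ref{th-3.888}, taking the hypergraph pairs to be $(\mathcal{H},\delta\mathcal{H})$ and $(\mathcal{H}',\delta\mathcal{H}')$ for the first sequence, and $(\Delta\mathcal{H},\mathcal{H})$ and $(\Delta\mathcal{H}',\mathcal{H}')$ for the second. All the substantive work is in checking that the hypothesis of Theorem~\ref{th-3.888}, namely the pairwise intersection condition on both the ``big'' and ``small'' components of each pair, is inherited from the assumption on $\mathcal{H}$ and $\mathcal{H}'$.

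For the first long exact sequence, the condition on the ambient hypergraphs $\mathcal{H},\mathcal{H}'$ is given. So I only need to check the condition on the sub-hypergraphs $\delta\mathcal{H},\delta\mathcal{H}'$. Take $\tau\in\delta\mathcal{H}$ and $\tau'\in\delta\mathcal{H}'$ with $\tau\cap\tau'\neq\emptyset$. Since $\delta\mathcal{H}\subseteq\mathcal{H}$ and $\delta\mathcal{H}'\subseteq\mathcal{H}'$, the assumption on $\mathcal{H},\mathcal{H}'$ yields $\tau\cap\tau'\in\mathcal{H}\cap\mathcal{H}'$. To upgrade this to $\tau\cap\tau'\in\delta\mathcal{H}\cap\delta\mathcal{H}'$, I use the definition of $\delta\mathcal{H}$: any subset of $\tau\cap\tau'$ is a subset of $\tau$, and $\tau\in\delta\mathcal{H}$ guarantees every subset of $\tau$ lies in $\mathcal{H}$; hence $\tau\cap\tau'\in\delta\mathcal{H}$, and symmetrically $\tau\cap\tau'\in\delta\mathcal{H}'$. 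This verifies the intersection hypothesis for both levels, so Theorem~\ref{th-3.888} produces the first long exact sequence.

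For the second sequence, again the condition on $\mathcal{H},\mathcal{H}'$ is the assumption, so the work is to verify the condition on $\Delta\mathcal{H},\Delta\mathcal{H}'$. Given $\tau\in\Delta\mathcal{H}$ and $\tau'\in\Delta\mathcal{H}'$ with $\tau\cap\tau'\neq\emptyset$, pick $\sigma\in\mathcal{H}$ with $\tau\subseteq\sigma$ and $\sigma'\in\mathcal{H}'$ with $\tau'\subseteq\sigma'$. Then $\emptyset\neq\tau\cap\tau'\subseteq\sigma\cap\sigma'$, so $\sigma\cap\sigma'\neq\emptyset$ and by hypothesis $\sigma\cap\sigma'\in\mathcal{H}\cap\mathcal{H}'$. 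Since $\tau\cap\tau'$ is contained in both $\sigma\in\mathcal{H}$ and $\sigma'\in\mathcal{H}'$, it belongs to both $\Delta\mathcal{H}$ and $\Delta\mathcal{H}'$, giving $\tau\cap\tau'\in\Delta\mathcal{H}\cap\Delta\mathcal{H}'$. Theorem~\ref{th-3.888} then yields the second long exact sequence.

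I do not expect any real obstacle beyond this bookkeeping; the main subtlety is the small compatibility check that passing from $\mathcal{H}$ to $\delta\mathcal{H}$ (respectively to $\Delta\mathcal{H}$) preserves the intersection condition, and in both cases this follows directly from the defining closure property of $\delta$ and $\Delta$. Once the hypotheses of Theorem~\ref{th-3.888} have been confirmed in the two settings, the corollary is immediate.
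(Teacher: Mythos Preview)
Your proposal is correct and follows exactly the paper's approach: both long exact sequences are obtained by applying Theorem~\ref{th-3.888} to the pairs $(\mathcal{H},\delta\mathcal{H}),(\mathcal{H}',\delta\mathcal{H}')$ and $(\Delta\mathcal{H},\mathcal{H}),(\Delta\mathcal{H}',\mathcal{H}')$ respectively. The paper's proof merely states the substitutions without checking the intersection hypotheses, whereas you supply those verifications explicitly; note in passing that your checks for $\delta\mathcal{H}$ and for $\Delta\mathcal{H}$ actually do not require the assumption on $\mathcal{H},\mathcal{H}'$ at all (they follow purely from the closure properties of $\delta$ and $\Delta$), so the invocations of the hypothesis there are harmless but superfluous.
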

\begin{proof}
Let $\eta\in \delta\mathcal{H}$  and $\eta'\in\delta\mathcal{H}'$.  Then for any non-empty subset   $\sigma\subseteq\eta$  and   any  non-empty subset  $\sigma'\subseteq  \eta'$,  
we  have  $\sigma\in\mathcal{H}$  and  $\sigma'\in\mathcal{H}'$.   For any non-empty subset $\kappa\subseteq\eta\cap\eta'$,  we  have that $\kappa\subseteq\eta$  and $\kappa\subseteq\eta'$.  Thus $\kappa\in\mathcal{H}$  and  $\kappa\in \mathcal{H}'$.  That is,  $\kappa\in\mathcal{H}\cap\mathcal{H}'$.  This implies 
\begin{eqnarray}\label{eq-oin}
\eta\cap\eta'\in \delta(\mathcal{H}\cap\mathcal{H}').  
\end{eqnarray}
Thus  by Lemma~\ref{le-int-union}~(i)  and   (\ref{eq-oin})    we  have $\tau\cap\tau'\in  \delta\mathcal{H}\cap\delta\mathcal{H}'$.   Consequently,  the  hypergraph pairs  $(\mathcal{H}, \delta\mathcal{H})$  and   $(\mathcal{H}', \delta\mathcal{H}')$  satisfy (I) and (II).  
 Therefore, by 
substituting $(\mathcal{H},\mathcal{A})$ with $(\mathcal{H},\delta\mathcal{H})$ and substituting $(\mathcal{H}',\mathcal{A}')$ with $(\mathcal{H}',\delta\mathcal{H}')$ in  Theorem~\ref{th-3.888},   we  obtain 
the first long exact sequence.

Let $\tau\in \Delta\mathcal{H}$  and $\tau'\in\Delta\mathcal{H}'$.  Then there exist $\sigma\in\mathcal{H}$   and $\sigma'\in\mathcal{H}'$  such  that $\tau\subseteq\sigma$  and  $\tau'\in\sigma'$.   Suppose that $\tau\cap\tau'\neq\emptyset$.  Then  since $\tau\cap\tau'$  is a subset of $\sigma\cap\sigma'$,  it follows that $\sigma\cap\sigma'\neq\emptyset$.  With the help of (I),  we  have that $\sigma\cap\sigma'\in\mathcal{H}\cap\mathcal{H}'$.  Consequently,  
\begin{eqnarray}\label{eq-za}
\tau\cap\tau'\in  \Delta(\mathcal{H}\cap\mathcal{H}').   
\end{eqnarray} 
Thus by Lemma~\ref{le-int-union}~(iii)  and  (\ref{eq-za})   we  have  $\tau\cap\tau'\in  \Delta\mathcal{H}\cap\Delta\mathcal{H}'$.  Consequently,  the  hypergraph pairs  $(\Delta\mathcal{H},\mathcal{H})$  and   $(\Delta\mathcal{H}',\mathcal{H}')$  satisfy (I) and (II).  
Therefore,  by  substituting $(\mathcal{H},\mathcal{A})$ with  $(\Delta\mathcal{H},\mathcal{H})$  and substituting $(\mathcal{H}',\mathcal{A}')$ with  $(\Delta\mathcal{H}',\mathcal{H}')$  in  Theorem~\ref{th-3.888},   we obtain the  second long exact sequence.  
\end{proof}

We have the following example. 

\begin{example}
(i).  Let  $(\mathcal{H},\mathcal{A})$  and $(\mathcal{H}',\mathcal{A}')$  be two hypergraph pairs satisfying (I), (II) and  (III).     
Then the quotient space $\Delta[n_i]/\Delta[m_i]=|\Delta[n_i]|/|\Delta[m_i]|$  is contractible for each $1\leq i\leq k$,  which implies 
\begin{eqnarray*}
H_n(\mathcal{H}\cap\mathcal{H}',\mathcal{A}\cap\mathcal{A}')=0, ~~~ n\geq 1. 
\end{eqnarray*}
  Therefore,  by Theorem~\ref{th-3.888}  we have
\begin{eqnarray*}
H_n(\mathcal{H}\cup\mathcal{H}',\mathcal{A}\cup\mathcal{A}')\cong H_n(\mathcal{H},\mathcal{A})\oplus H_n(\mathcal{H}',\mathcal{A}'),  ~~~n\geq 2. 
\end{eqnarray*}

\noindent  (ii).   Let $(\mathcal{H}(j),\mathcal{A}(j))$,  $1\leq j\leq m$,  be a sequence   of hypergraph pairs such that for any $1\leq j_1<j_2\leq m$,  the pair  $(\mathcal{H}(j_1),\mathcal{A}(j_1))$ and $(\mathcal{H}(j_2),\mathcal{A}(j_2))$ satisfy  (I), (II) and (III).  By an induction on $m$,   we have 
\begin{eqnarray*}
H_n(\cup_{j=1}^m \mathcal{H}(j), \cup_{j=1}^m \mathcal{A}(j))= \oplus_{j=1}^m H_n(\mathcal{H}(j),\mathcal{A}(j)), ~~~ n\geq 2. 
\end{eqnarray*}

\noindent  (iii).   Let $V=\{v_0,v_1,v_2,v_3, w_0,w_1,w_2,w_3\}$  be a set of  eight  vertices.    Consider  the closed  tetrahedron
\begin{eqnarray*}
\Delta[3]&=&\{ \{v_0,v_1,v_2,v_3\},\{v_0,v_1,v_2\},\{v_0,v_1,v_3\},\{v_0,v_2,v_3\},\{v_1,v_2,v_3\}, \\
&&\{v_0,v_1\},\{v_0,v_2\},\{v_0,v_3\}, \{v_1,v_2\},\{v_1,v_3\},\{v_2,v_3\},\{v_0\},\{v_1\},\{v_2\},\{v_3\}\}
\end{eqnarray*}
 where   $v_0,v_1,v_2,v_3 $ are the distinct  four vertices of $\Delta[3]$.  
    Consider the eight hypergraphs
  \begin{eqnarray*}
  &\mathcal{H}(0) =  \{\{w_0,v_1,v_2,v_3\}, \{w_0,v_1,v_2\}, \{w_0,v_1,v_3\},\{w_0,v_2,v_3\}\} \cup \Delta[3],  \\
  & \mathcal{H}(1) = \{\{v_0,w_1,v_2,v_3\}, \{v_0,w_1,v_2\}, \{v_0,w_1,v_3\},\{w_1,v_2,v_3\}\}  \cup \Delta[3], \\
    & \mathcal{H}(2) = \{\{v_0,v_1,w_2,v_3\}, \{v_0,v_1,w_2\}, \{v_0,w_2,v_3\},\{v_1,w_2,v_3\}\}  \cup \Delta[3], \\
    & \mathcal{H}(3) = \{\{v_0,v_1,v_2,v_3\},  \{v_0,v_1,w_3\},\{v_0,v_2,w_3\},\{v_1,v_2,w_3\}\}  \cup \Delta[3], \\
     &\mathcal{A}(0) =  \{\{w_0,v_1,v_2\}, \{w_0,v_1,v_3\},\{w_0,v_2,v_3\}\} \cup \Delta[3],  \\
  & \mathcal{A}(1) = \{\{v_0,w_1,v_2\}, \{v_0,w_1,v_3\},\{w_1,v_2,v_3\}\}  \cup \Delta[3], \\
    & \mathcal{A}(2) = \{\{v_0,v_1,w_2\}, \{v_0,w_2,v_3\},\{v_1,w_2,v_3\}\}  \cup \Delta[3], \\
    & \mathcal{A}(3) = \{\{v_0,v_1,w_3\},\{v_0,v_2,w_3\},\{v_1,v_2,w_3\}\}  \cup \Delta[3].
      \end{eqnarray*}
For any $0\leq i<j\leq 3$,  it can be verified that $(\mathcal{H}(i), \mathcal{A}(i))$  and  $(\mathcal{H}(j), \mathcal{A}(j))$  satisfy (a), (b), and (c) in (i).  
  Consequently, for $n\geq 2$,   it follows from (ii)  that  
\begin{eqnarray}\label{eq-ex.mvsq}
H_n(\cup_{i=0}^3 \mathcal{H}(i), \cup_{i=0}^3\mathcal{A}(i))=\oplus_{i=0}^3 H_n(\mathcal{H}(i),\mathcal{A}(i))
= H_n(\mathcal{H}(0),\mathcal{A}(0))^{\oplus 4}. 
\end{eqnarray} 
On the other hand,  
\begin{eqnarray*}
&{\rm Inf}_3(\mathcal{H}(0))={\rm Sup}_3(\mathcal{H}(0))=\mathbb{Z}(\{v_0,v_1,v_2,v_3\},\{w_0,v_1,v_2,v_3\}),\\
 &{\rm Inf}_3(\mathcal{A}(0))={\rm Sup}_3(\mathcal{A}(0))=0,\\
 &{\rm Inf}_2(\mathcal{H}(0))={\rm Sup}_2(\mathcal{H}(0))={\rm Inf}_2(\mathcal{A}(0))={\rm Sup}_2(\mathcal{A}(0))\\
 &=\mathbb{Z}(\{v_0,v_1,v_2\},\{v_0,v_1,v_3\}, 
  \{v_0,v_2,v_3\},\{v_1,v_2,v_3\}, \\
  &\{v_0,v_1,v_3\}-\{w_0,v_2,v_3\}+ \{w_0,v_1,v_3\}-\{w_0,v_1,v_2\} ),\\
 &{\rm Inf}_1(\mathcal{H}(0))={\rm Inf}_1(\mathcal{A}(0))\\
 &=\mathbb{Z}(\{v_0,v_1\},\{v_0,v_2\},\{v_0,v_3\}, \{v_1,v_2\},\{v_1,v_3\},\{v_2,v_3\}),\\
 &{\rm  Sup}_1(\mathcal{H}(0))={\rm Sup}_1(\mathcal{A}(0))\\
 &=\mathbb{Z}(\{v_0,v_1\},\{v_0,v_2\},\{v_0,v_3\}, \{v_1,v_2\},\{v_1,v_3\},\{v_2,v_3\},\\
 &\{v_1,v_2\}-\{w_0,v_2\}+\{w_0,v_1\},\{v_1,v_3\}-\{w_0,v_3\}+\{w_0,v_1\},\\
 &\{v_2,v_3\}-\{w_0,v_3\}+\{w_0,v_2\}),\\
 &{\rm Inf}_0(\mathcal{H}(0))={\rm Sup}_0(\mathcal{H}(0))={\rm Inf}_0(\mathcal{A}(0))={\rm Sup}_0(\mathcal{A}(0))\\
 &=\mathbb{Z}(\{v_0\},\{v_1\},\{v_2\},\{v_3\})   
\end{eqnarray*}
which implies
\begin{eqnarray*}
&{\rm Inf}_3(\mathcal{H}(0))/{\rm Inf}_3(\mathcal{A}(0))={\rm Sup}_3(\mathcal{H}(0))/{\rm sup}_3(\mathcal{A}(0))\\
&=\mathbb{Z}(\{v_0,v_1,v_2,v_3\},\{w_0,v_1,v_2,v_3\}), \\
& {\rm Inf}_n(\mathcal{H}(0))/{\rm Inf}_n(\mathcal{A}(0))={\rm Sup}_n(\mathcal{H}(0))/{\rm  Sup}_n(\mathcal{A}(0))=0 ~{\rm ~for~}n\neq  3. 
\end{eqnarray*}
Hence
\begin{eqnarray*}
H_n(\mathcal{H}(0),\mathcal{A}(0))=\left\{
\begin{array}{ll}
0, & n=0,1,2,\\
\mathbb{Z}\oplus\mathbb{Z}, &n=3. 
\end{array}
\right. 
\end{eqnarray*}
Therefore,   if  we let 
$\mathcal{H}=\cup_{i=0}^3 \mathcal{H}(i)$  and   
$\mathcal{A}=\cup_{i=0}^3\mathcal{A}(i)$,  
then with the help of (\ref{eq-ex.mvsq}) we have
\begin{eqnarray*}
H_n(\mathcal{H},\mathcal{A})=\left\{
\begin{array}{ll}
0, & n=2,\\
\mathbb{Z}^{\oplus 8}, &n=3. 
\end{array}
\right. 
\end{eqnarray*}
\end{example}

\section{Two-dimensional  persistence  of  the relative  embedded  homology of    hypergraph  pairs}\label{s8x}

Let $f: \mathcal{H}\longrightarrow \mathbb{R}$ be a real valued function on a hypergraph $\mathcal{H}$.  Then $f$ assigns  a real number $f(\sigma)$ to each hyperedge $\sigma$.  For any $t\in \mathbb{R}$,   the {\it level hypergraph} is 
$\mathcal{H}(t)=\{\sigma\in\mathcal{H}\mid f(\sigma)\leq t\}$.   For any real numbers $a\leq b$,  we have a hypergraph pair $(\mathcal{H}(b), \mathcal{H}(a))$.   The two inclusions 
\begin{eqnarray*}
(\delta(\mathcal{H}(b)), \delta(\mathcal{H}(a)))\longrightarrow (\mathcal{H}(b), \mathcal{H}(a))\longrightarrow (\Delta(\mathcal{H}(b)), \Delta(\mathcal{H}(a)))
\end{eqnarray*}
of hypergraph pairs (or simplicial complex pairs)  induce two homomorphisms
\begin{eqnarray*}
H_*(\delta(\mathcal{H}(b)), \delta(\mathcal{H}(a)))\longrightarrow H_*(\mathcal{H}(b), \mathcal{H}(a))\longrightarrow H_*(\Delta(\mathcal{H}(b)), \Delta(\mathcal{H}(a)))
\end{eqnarray*}
of the relative  (embedded)  homology groups.   For any two points  $(x,y), (x',y')\in \mathbb{R}^2$,  we write $(x,y)\leq (x',y')$ if and only if  $x\leq y$ and $x'\leq y'$.  
Let $a\leq b$ and $a'\leq b'$ with $(a,b)\leq (a',b')$.  We have a commutative diagram 
{\footnotesize\begin{eqnarray*}
\xymatrix{
(\delta(\mathcal{H}(b)), \delta(\mathcal{H}(a)))\ar[r]\ar[d] &(\mathcal{H}(b), \mathcal{H}(a))\ar[r] \ar[d]&(\Delta(\mathcal{H}(b)), \Delta(\mathcal{H}(a)))\ar[d]\\
(\delta(\mathcal{H}(b')), \delta(\mathcal{H}(a')))\ar[r] &(\mathcal{H}(b'), \mathcal{H}(a'))\ar[r] &(\Delta(\mathcal{H}(b')), \Delta(\mathcal{H}(a')))
}
\end{eqnarray*}}of hypergraph pairs where each arrow is an injection.  This induces a commutative diagram of relative (embedded)
  homology groups 
 {\footnotesize \begin{eqnarray}\label{eq-monzmfe}
\xymatrix{
H_*(\delta(\mathcal{H}(b)), \delta(\mathcal{H}(a)))\ar[r]\ar[d] &H_*(\mathcal{H}(b), \mathcal{H}(a))\ar[r] \ar[d]&H_*(\Delta(\mathcal{H}(b)), \Delta(\mathcal{H}(a)))\ar[d]\\
H_*(\delta(\mathcal{H}(b')), \delta(\mathcal{H}(a')))\ar[r] &H_*(\mathcal{H}(b'), \mathcal{H}(a'))\ar[r] &H_*(\Delta(\mathcal{H}(b')), \Delta(\mathcal{H}(a'))). 
}
\end{eqnarray}}
 
By the definition of multi-dimensional persistence  homology (cf. \cite[Definition~10]{m3},  \cite{m1},  and \cite[Subsection~2.1]{m2}),  Corollary~\ref{co-3.66},  and   \cite[p. 337]{ana1},   the next proposition  follows.  
 \begin{proposition}\label{pr-8.1}
Let $\mathcal{H}$ be a hypergraph and  $f: \mathcal{H}\longrightarrow \mathbb{R}$ be a real valued function on $\mathcal{H}$.  Then we have  a sequence of two-dimensional persistence  modules
\begin{eqnarray}\label{eq-zomdqnmt}
& \{H_*(\delta(\mathcal{H}(b)), \delta(\mathcal{H}(a)))\}_{a\leq b}
 \longrightarrow \{H_*(\mathcal{H}(b), \mathcal{H}(a))\}_{ a\leq b} \nonumber\\   
 &\longrightarrow \{H_*(\Delta(\mathcal{H}(b)), \Delta(\mathcal{H}(a)))\}_{ a\leq b}
\end{eqnarray}
where each arrow is a persistence homomorphism between persistent modules.  
Moreover,  for any $a\leq b\leq c$ and any $n\geq 0$  we have
\begin{eqnarray*}
{\rm rank} H_n(\mathcal{H}(c), \mathcal{H}(a))&\leq& {\rm rank} H_n(\mathcal{H}(c),  \mathcal{H}(b)) \\
&&+ {\rm rank} H_n(\mathcal{H}(b),  \mathcal{H}(a)),\\
{\rm rank} H_n(\delta(\mathcal{H}(c)), \delta(\mathcal{H}(a)))&\leq& {\rm rank} H_n(\delta(\mathcal{H}(c)),  \delta(\mathcal{H}(b)))\\
&& + {\rm rank} H_n(\delta(\mathcal{H}(b)),  \delta(\mathcal{H}(a))),\\
{\rm rank} H_n(\Delta(\mathcal{H}(c)), \Delta(\mathcal{H}(a)))&\leq& {\rm rank} H_n(\Delta(\mathcal{H}(c)),  \Delta(\mathcal{H}(b))) \\
&&+ {\rm rank} H_n(\Delta(\mathcal{H}(b)),  \Delta(\mathcal{H}(a))).  
\end{eqnarray*}
\end{proposition}
\begin{proof}
The persistence  modules and  persistence homomorphisms in (\ref{eq-zomdqnmt})  follow from  the diagram (\ref{eq-monzmfe})  and the definition of persistence modules and  persistence homomorphisms  (cf.   \cite[Definition~10]{m3},  \cite{m1},  and \cite[Subsection~2.1]{m2}).      The  three inequalities  follow from  Corollary~\ref{co-3.66}  and   \cite[p. 337]{ana1}.   
\end{proof}

We prospect that the multi-dimensional persistence  homology  theory  may be  applied to the  relative  (embedded) homology  groups of  hypergraph pairs  to give  a potential tool in  the  data analytics of the  hypergraph-type complex networks.

\section*{Acknowledgement} {The  authors  would like to express  their  deep
gratitude to the referee for the careful reading of the manuscript.}

 Shiquan Ren  
 
 Address:  School of Mathematics and Statistics, Henan University,  Kaifeng,  475004,  China.
 
  e-mail:  renshiquan@henu.edu.cn

\medskip

 Jie  Wu 
 Address: Yanqi Lake Beijing Institute of Mathematical Sciences and Applications, Beijing, 101408, China.
 
  e-mail: wujie@bimsa.cn

\medskip

 Mengmeng  Zhang 

Address: School of Mathematical Sciences, Hebei Normal University, Shijiazhuang, 050024, P. R. China;
Yanqi Lake Beijing Institute of Mathematical Sciences and Applications, Beijing, 101408, P. R. China.

e-mail:  mm\_zhang00@163.com

\end{document}